\numberwithin{equation}{section}
\providecommand{\U}[1]{\protect\rule{.1in}{.1in}}
\providecommand{\U}[1]{\protect \rule{.1in}{.1in}}
\newtheorem{theorem}{Theorem}[section]
\newtheorem{assumption}[theorem]{Assumption}
\newtheorem{definition}[theorem]{Definition}
\newtheorem{lemma}[theorem]{Lemma}
\newtheorem{proposition}[theorem]{Proposition}
\newtheorem{remark}[theorem]{Remark}
\newenvironment{proof}[1][Proof]{\noindent \textbf{#1.} }{\  \rule{0.5em}{0.5em}}
\def \E{\mathsf{E}}
\begin{document}
	\title{The Skorokhod problem with two nonlinear constraints}
	\author{ Hanwu Li\thanks{Research Center for Mathematics and Interdisciplinary Sciences, Shandong University, Qingdao 266237, Shandong, China. lihanwu@sdu.edu.cn.}
	\thanks{Frontiers Science Center for Nonlinear Expectations (Ministry of Education), Shandong University, Qingdao 266237, Shandong, China.} }
	\date{}
	\maketitle
	\begin{abstract}
	In this paper, we study the Skorokhod problem with two constraints, where the constraints are in a nonlinear fashion. We prove the existence and uniqueness of the solution and also provide the explicit construction for the solution. In addition, a number of properties of the solution, including continuity under uniform and $J_1$ metrics, comparison principle and non-anticipatory property, are investigated. 
	\end{abstract}
	
	\textbf{Key words}: Skorokhod problem, nonlinear reflecting boundaries, double reflecting boundaries, comparison principle
	
	\textbf{MSC-classification}: 60G05, 60G17

\section{Introduction}

	The Skorokhod problem is a convenient tool to study equations with reflecting boundary conditions. In 1961, Skorokhod \cite{Skorokhod1} originally constructed the solution of a stochastic differential equation (SDE for short) on the half-line $[0,\infty)$. A nondecreasing function is added in this equation to push the solution upward in a minimal way such that it satisfies to so-called Skorokhod condition. In \cite{CE} and  \cite{CEM}, the authors considered a deterministic version of the Skorokhod problem for continuous functions and for c\`{a}dl\`{a}g functions, respectively. A multidimensional extension of the Skorokhod problem was provided by Tanaka \cite{Tanaka}.
	
	Due to the wide applications of reflecting Brownian motions including statistical physics \cite{BN,SW'}, queueing theory \cite{MM}, control theory \cite{EK}, the Skorokhod problem with two reflecting boundaries, called also two-sided Skorokhod problem, has attracted a great deal of attention by many researchers. Roughly speaking, the Skorokhod problem with two reflecting boundaries $\alpha,\beta$ is to find a pair of functions $(X,K)$ such that $X_t=S_t+K_t\in[\alpha_t,\beta_t]$ for any $t\geq 0$ and $K$ satisfies some necessary conditions, where $S,\alpha,\beta$ ($\alpha<\beta$) are some given right-continuous functions with left limits. For simplicity, $(X,K)$ is called the solution to the Skorokhod problem on $[\alpha,\beta]$ for $S$. Kruk et al. \cite{KLR} presented an explicit formula to make a deterministic function stays in the interval $[0,a]$ (i.e., $\alpha,\beta$ are two constants) and studied the properties of the solutions. Then, Burdzy et al. \cite{BKR} considered the Skorokhod problem in a time-dependent interval. They obtained the existence and uniqueness of a solution to the so-called extended Skorokhod problem, which is a slight generalization of the Skorokhod problem. Under the assumption that $\inf_t(\beta_t-\alpha_t)>0$, the solution to the extended Skorokhod problem coincides with the one to the Skorokhod problem.  We refer the interested reader to \cite{HJO,S1,S2,SW10,SW} and the references therein for related works in this field.

It is worth pointing out that in the existing literatures, the solution of a Skorokhod problem with two reflecting boundaries is required to remain in a (time-dependent) interval. The objective of this paper is to study the Skorokhod problem with two reflecting boundaries behaving in a nonlinear way, that is, we need to make sure that two functions of the solution stay positive and negative, respectively. More precisely, let $S$ be a right-continuous function with left limits on $[0,\infty)$ taking values in $\mathbb{R}$. Given two functions $L,R:[0,\infty)\times\mathbb{R}\rightarrow \mathbb{R}$ with $L<R$, we need to find a pair of functions $(X,K)$, such that 
\begin{itemize}
\item[(i)] $X_t=S_t+K_t$;
\item[(ii)] $L(t,X_t)\leq 0\leq R(t,X_t)$;
\item[(iii)] $K_{0-}=0$ and $K$ has the decomposition $K=K^r-K^l$, where $K^r,K^l$ are nondecreasing functions satisfying
\begin{align*}
\int_0^\infty I_{\{L(s,X_s)<0\}}dK^l_s=0, \  \int_0^\infty I_{\{R(s,X_s)>0\}}dK^r_s=0.
\end{align*}
\end{itemize} 		
By choosing $L,R$ appropriately, the Skorokhod problem with two nonlinear reflecting boundaries may degenerate into the classical Skorokhod problem in \cite{Skorokhod1}, the Skorokhod map on $[0,a]$ in \cite{KLR} and the Skorokhod problem in a time-dependent interval in \cite{BKR}, \cite{S1}, \cite{S2}. The difficulty in solving the nonlinear problem is to construct the bounded variation function $K$ without precisely given obstacles $\alpha,\beta$. Recalling the results in \cite{BKR}, the second component of the solution to the Skorokhod problem on $[\alpha,\beta]$ for $S$ is given by
\begin{align*}
K_t=-\max\left(\left[(S_0-\beta_0)^+\wedge \inf_{u\in [0,t]}(S_u-\alpha_u)\right],\sup_{s\in[0,t]}\left[(S_s-\beta_s)\wedge \inf_{u\in[s,t]}(S_u-\alpha_u)\right]\right).
\end{align*}
Motivated by the construction for solutions to Skorokhod problems with two reflecting boundaries studied in \cite{KLR}, \cite{S1},  for the nonlinear case,  we first find the solutions $\Phi^S$, $\Psi^S$  satisfying the following nonlinear reflecting constraints, respectively
\begin{align*}
L(t,S_t+\Phi^S_t)=0, \ R(t,S_t+\Psi^S_t)=0, \ t\geq 0.
\end{align*}
We show that, $\Phi^S$ and $\Psi^S$ will take over the roles of $\beta-S$ and $\alpha-S$, respectively and the induced $K$ is the second component of the solution to the Skorokhod problem with two nonlinear reflecting boundaries. The explicit characterization for $K$ allows us to obtain the continuity property of the solution with respect to the input function $S$ and the nonlinear functions $L,R$. We also present some comparison theorems. Roughly speaking, the nondecreasing function $K^r$ aims to push the solution upward such that $R(t,X_t)\geq 0$ while the nondecreasing function $K^l$ tries to pull the solution downward to makes sure that $L(t,X_t)\leq 0$. It is natural to conjecture that the forces $K^r$ and $K^l$ will increase if $R$ becomes smaller or $L$ becomes larger. The Skorokhod problem with two nonlinear reflecting boundaries will be a building block for studying doubly mean reflected (backward) SDEs (see \cite{FS,FS2} for the linear reflecting case).

The paper is organized as follows. We first formulate the Skorokhod problem with two nonlinear reflecting boundaries in details and provide the existence and uniqueness result in Section 2. Then, in Section 3, we investigated the properties of solutions to Skorokhod problems, such as the comparison property and the continuity property.

\section{Skorokhod problem with two nonlinear reflecting boundaries}

\subsection{Basic notations and problem formulation}
Let $D[0,\infty)$ be the set of real-valued right-continuous functions having left limits (usually called c\`{a}dl\`{a}g functions). $I[0,\infty)$, $C[0,\infty)$, $BV[0,\infty)$ and $AC[0,\infty)$ denote the subset of $D[0,\infty)$ consisting of nondecreasing functions, continuous functions, functions of bounded variation and absolutely continuous functions, respectively. For any $K\in BV[0,\infty)$ and $t\geq 0$, $|K|_t$ is the total variation of $K$ on $[0,t]$.

\begin{definition}\label{def}
Let $S\in D[0,\infty)$, $L,R:[0,\infty)\times \mathbb{R}\rightarrow \mathbb{R}$ be two functions with $L\leq R$. A pair of functions $(X,K)\in D[0,\infty)\times BV[0,\infty)$ is called a solution of the Skorokhod problem for $S$ with nonlinear constraints $L,R$ ($(X,K)=\mathbb{SP}_L^R(S)$ for short) if 
\begin{itemize}
\item[(i)] $X_t=S_t+K_t$;
\item[(ii)] $L(t,X_t)\leq 0\leq R(t,X_t)$;
\item[(iii)] $K_{0-}=0$ and $K$ has the decomposition $K=K^r-K^l$, where $K^r,K^l$ are nondecreasing functions satisfying
\begin{align*}
\int_0^\infty I_{\{L(s,X_s)<0\}}dK^l_s=0, \  \int_0^\infty I_{\{R(s,X_s)>0\}}dK^r_s=0.
\end{align*}
\end{itemize}
\end{definition}

\begin{remark}\label{rem1}
(i)  The integration in (iii) of Definition \ref{def} is carried out including the initial time $0$. That is, if $K_0>0$, we must have $R(0,X_0)K_0=0$; if $K_0<0$, we must have $L(0,X_0)K_0=0$.

\noindent (ii) If $L\equiv-\infty$ and $R(t,x)=x$, then the Skorokhod problem associated with $S,L,R$ turns into the classical Skorokhod problem as in \cite{Skorokhod1}. 

\noindent (iii) If $L(t,x)=x-a$ and $R(t,x)=x$, where $a$ is a positive constant, then the Skorokhod problem associated with $S,L,R$ coincides with the Skorokhod map on $[0,a]$ studied in \cite{KLR}.

\noindent (iv) If $L(t,x)=x-r_t$ and $R(t,x)=x-l_t$, where $r,l\in D[0,\infty)$ with $l\leq r$, then the Skorokhod problem associated with $S,L,R$ corresponds to the Skorokhod problem on $[l,r]$ as in \cite{BKR}, \cite{S1}, \cite{S2}.
\end{remark}

\begin{remark}\label{remark}
Let $(X,K)=\mathbb{SP}_L^R(S)$. For any $t>0$, it is easy to check that 
\begin{align*}
K_t-K_{t-}=X_t-X_{t-}-(S_t-S_{t-}).
\end{align*}
If $K_t-K_{t-}>0$, by Definition \ref{def}, we have
\begin{align*}
K^r_t-K^r_{t-}=K_t-K_{t-}=X_t-X_{t-}-(S_t-S_{t-}) \textrm{ and } K^l_t-K^l_{t-}=0.
\end{align*}
Similarly, if $K_t-K_{t-}<0$, we have
\begin{align*}
K^l_t-K^l_{t-}=-(K_t-K_{t-})=-(X_t-X_{t-}-(S_t-S_{t-})) \textrm{ and } K^r_t-K^r_{t-}=0.
\end{align*}
Therefore,
\begin{align*}
&K^r_t-K^r_{t-}=(X_t-X_{t-}-(S_t-S_{t-}))^+,\\
&K^l_t-K^l_{t-}=(X_t-X_{t-}-(S_t-S_{t-}))^-.
\end{align*}
\end{remark}

In order to solve the Skorokhod problem with two nonlinear reflecting boundaries, we need to propose the following assumption on the functions $L,R$.
\begin{assumption}\label{ass1}
The functions $L,R:[0,\infty)\times \mathbb{R}\rightarrow \mathbb{R}$ satisfy the following conditions
\begin{itemize}
\item[(i)] For each fixed $x\in\mathbb{R}$, $L(\cdot,x),R(\cdot,x)\in D[0,\infty)$;
\item[(ii)] For any fixed $t\geq 0$, $L(t,\cdot)$, $R(t,\cdot)$ are strictly increasing;
\item[(iii)] There exists two positive constants $0<c<C<\infty$, such that for any $t\geq 0$ and $x,y\in \mathbb{R}$,
\begin{align*}
&c|x-y|\leq |L(t,x)-L(t,y)|\leq C|x-y|,\\
&c|x-y|\leq |R(t,x)-R(t,y)|\leq C|x-y|.
\end{align*} 
\item[(iv)] $\inf_{(t,x)\in[0,\infty)\times\mathbb{R}}(R(t,x)-L(t,x))>0$.
\end{itemize}
\end{assumption}

Conditions (ii) and (iii) in Assumption \ref{ass1} implies that for any $t\geq 0$, 
\begin{align*}
&\lim_{x\rightarrow -\infty} L(t,x)=-\infty, \ \lim_{x\rightarrow +\infty} L(t,x)=+\infty,\\
&\lim_{x\rightarrow -\infty} R(t,x)=-\infty, \ \lim_{x\rightarrow +\infty} R(t,x)=+\infty.
\end{align*}
Besides, suppose that $t_n\downarrow t$, $s_n\uparrow s$, $x_n\rightarrow x$. By conditions (i) and (iii) in Assumption \ref{ass1}, we have
\begin{align*}
&\lim_{n\rightarrow \infty}L(t_n,x_n)=L(t,x), \ \lim_{n\rightarrow \infty}R(t_n,x_n)=R(t,x),\\
&\lim_{n\rightarrow \infty}L(s_n,x_n)=L(s-,x), \  \lim_{n\rightarrow \infty}R(s_n,x_n)=R(s-,x).
\end{align*}

Now, given $S\in D[0,\infty)$, for any $t\geq 0$, let $\Phi_t^S$, $\Psi_t^S$ be the unique solution to  the following equations,  respectively
\begin{align}\label{phipsi}
L(t,S_t+x)=0, \ R(t,S_t+x)=0.
\end{align}
In the following, for simplicity, we always omit the superscript $S$. We first investigate the property of $\Phi$ and $\Psi$.

\begin{lemma}\label{lem}
	Under Assumption \ref{ass1}, for any given $S\in D[0,\infty)$, we have $\Phi,\Psi\in D[0,\infty)$ and 
	\begin{align*}
		\inf_{t\geq 0} (\Phi_t-\Psi_t)>0.
	\end{align*}
\end{lemma}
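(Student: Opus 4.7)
The plan is to verify the three things that the lemma packages together: (a) for each $t$, the equations in \eqref{phipsi} admit unique solutions $\Phi_t,\Psi_t$; (b) the resulting functions lie in $D[0,\infty)$; (c) their difference is bounded below by a strictly positive constant.

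For (a), I would argue that parts (ii) and (iii) of Assumption \ref{ass1} make $x\mapsto L(t,S_t+x)$ and $x\mapsto R(t,S_t+x)$ strictly increasing bi-Lipschitz homeomorphisms of $\mathbb{R}$ onto itself, so each equation has a unique real solution. The Lipschitz bounds actually give the quantitative estimate
\[
|\Phi_t|\le \tfrac{1}{c}|L(t,S_t)|,\qquad |\Psi_t|\le \tfrac{1}{c}|R(t,S_t)|,
\]
which will be convenient later because it shows that along any convergent sequence $(t_n,S_{t_n})$ the values $\Phi_{t_n}$ stay bounded.

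For (b), fix $t\ge 0$ and a sequence $t_n\downarrow t$. Since $S\in D[0,\infty)$, $S_{t_n}\to S_t$, and the continuity observations recorded just before the lemma give $L(t_n,S_{t_n}+x)\to L(t,S_t+x)$ for every fixed $x$. By the bound above $\{\Phi_{t_n}\}$ is bounded, so any subsequential limit $\phi$ satisfies $L(t,S_t+\phi)=0$; uniqueness from (a) forces $\phi=\Phi_t$, so the whole sequence converges, i.e.\ $\Phi$ is right-continuous at $t$. The same reasoning with $s_n\uparrow s$ and the left-limit convergence $L(s_n,\cdot)\to L(s-,\cdot)$ shows that $\Phi_{s_n}$ converges to the unique solution of $L(s-,S_{s-}+x)=0$, which is well defined since the uniform Lipschitz bounds $c\le L(s_n,\cdot)\text{-slope}\le C$ pass to the pointwise limit. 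Thus $\Phi\in D[0,\infty)$, and an identical argument gives $\Psi\in D[0,\infty)$.

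For (c), the key is Assumption \ref{ass1}(iv). Let $\delta:=\inf_{(t,x)}(R(t,x)-L(t,x))>0$. Applied at $x=S_t+\Psi_t$, since $R(t,S_t+\Psi_t)=0$, we get $L(t,S_t+\Psi_t)\le -\delta$. Combined with $L(t,S_t+\Phi_t)=0$ and the upper Lipschitz bound in (iii),
\[
\delta\le |L(t,S_t+\Phi_t)-L(t,S_t+\Psi_t)|\le C|\Phi_t-\Psi_t|,
\]
so $|\Phi_t-\Psi_t|\ge \delta/C$. Strict monotonicity of $L(t,\cdot)$ together with $L(t,S_t+\Phi_t)=0>-\delta\ge L(t,S_t+\Psi_t)$ then pins down the sign, giving $\Phi_t-\Psi_t\ge \delta/C$ uniformly in $t$.

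None of the steps is genuinely deep; the main point of care is the left-limit part of (b), where one must verify that $L(s-,\cdot)$ (and $R(s-,\cdot)$) inherits strict monotonicity with the same Lipschitz constants $c,C$ so that the limiting equation has a unique solution, and then confirm convergence via the uniform bound on $\{\Phi_{s_n}\}$.
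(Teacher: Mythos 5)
Your argument is correct and follows essentially the same route as the paper: boundedness of $\{\Phi_{t_n}\}$ from the lower Lipschitz bound, identification of every subsequential limit via uniqueness of the root of the limiting equation, and the bound $\Phi_t-\Psi_t\ge \delta/C$ obtained by evaluating $R-L$ at $S_t+\Psi_t$ exactly as in the paper. The only (harmless) difference is that you get boundedness from the explicit a priori estimate $|\Phi_t|\le \frac{1}{c}|L(t,S_t)|$ rather than the paper's argument by contradiction, and you spell out the left-limit case and the well-posedness of \eqref{phipsi}, which the paper leaves implicit.
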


\begin{proof}
	We first prove that $\Phi\in D[0,\infty)$ and the proof of $\Psi\in D[0,\infty)$ is analogous. Let $\{t_n\}_{n=1}^\infty\subset [0,\infty)$ be such that $t_n\downarrow t$. We claim that $\{\Phi_{t_n}\}_{n=1}^\infty$ is bounded. Suppose that the claim does not hold. Without loss of generality, we may find a subsequence $\{\Phi_{t_{n_k}}\}_{k=1}^\infty$ such that 
	\begin{align*}
		\lim_{k\rightarrow \infty} \Phi_{t_{n_k}}=+\infty.
	\end{align*}
    Without loss of generality, we assume that $\Phi_{t_{n_k}}\geq \Phi_t$ for any $k\geq 1$. Simple calculation implies that
    \begin{align*}
    	L(t_{n_k},S_{t_{n_k}}+\Phi_{t_{n_k}})
    	=&L(t_{n_k},S_{t_{n_k}}+\Phi_{t_{n_k}})-L(t_{n_k},S_{t_{n_k}}+\Phi_{t})\\
    	&+L(t_{n_k},S_{t_{n_k}}+\Phi_{t})-L(t,S_t+\Phi_t)\\
    	\geq &c(\Phi_{t_{n_k}}-\Phi_t)+L(t_{n_k},S_{t_{n_k}}+\Phi_{t})-L(t,S_t+\Phi_t).
    \end{align*}
	It follows that 
	\begin{align*}
		\lim_{k\rightarrow \infty} L(t_{n_k},S_{t_{n_k}}+\Phi_{t_{n_k}})=+\infty,
	\end{align*}
	which is a contradiction. To show that $\Phi_{t_n}\rightarrow \Phi_t$, it suffices to prove that for any subsequence $\{t_{n'}\}\subset\{t_n\}$, one can choose a subsequence $\{t_{n''}\}\subset\{t_{n'}\}$ such that 
	$\Phi_{t_{n''}}\rightarrow \Phi_t$. Since $\{\Phi_{t_{n'}}\}$ is bounded, there exists a subsequence $\{\Phi_{t_{n''}}\}$ such that $\Phi_{t_{n''}}\rightarrow a$. Noting that 
	\begin{align*}
		L(t_{n''},S_{t_{n''}}+\Phi_{t_{n''}})=0,
	\end{align*}
	letting $n''\rightarrow \infty$, we obtain that $L(t,S_t+a)=0$, which indicates that $a=\Phi_t$. Therefore, $\Phi$ is right continuous. Similarly, we can show that $\Phi$ has left limits.
	
	It remains to prove that $\inf_{t\geq 0} (\Phi_t-\Psi_t)>0$. First, it is easy to check that $\Phi_t>\Psi_t$ for any $t\geq 0$. Set 
	\begin{align*}
		\alpha:=\inf_{(t,x)\in[0,\infty)\times\mathbb{R}}(R(t,x)-L(t,x))>0.
	\end{align*}
	It follows that 
	\begin{align*}
		-L(t,S_t+\Psi_t)=R(t,S_t+\Psi_t)-L(t,S_t+\Psi_t)>\alpha.
	\end{align*}
	Then, we obtain that 
	\begin{align*}
		\alpha<L(t,S_t+\Phi_t)-L(t,S_t+\Psi_t)\leq C(\Phi_t-\Psi_t),
	\end{align*}
	which implies that $\inf_{t\geq 0}(\Phi_t-\Psi_t)\geq \frac{\alpha}{C}$. The proof is complete. 
\end{proof}

\begin{remark}\label{continuity of phipsi}
Assume that for any $x\in\mathbb{R}$, $L(\cdot,x),R(\cdot,x)\in C[0,\infty)$ and $L,R$ satisfy (ii)-(iv) in Assumption \ref{ass1}. Given $S\in C[0,\infty)$, we can show that $\Phi,\Psi\in C[0,\infty)$.
\end{remark}

\subsection{Existence and uniqueness result}

In this subsection, we first establish uniqueness of solutions to the Skorokhod problem with two nonlinear reflecting boundaries. The proof is a relatively straightforward modification of the proof for the extended Skorokhod problem in a time-dependent interval (see, Proposition 2.8 in \cite{BKR}).

\begin{proposition}\label{uniqueness}
Suppose that $L,R$ satisfy Assumption \ref{ass1}. For any given $S\in D[0,\infty)$, there exists at most one pair of solution to the Skorokhod problem $\mathbb{SP}_L^R(S)$.
\end{proposition}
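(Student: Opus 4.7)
I would follow the standard pathwise uniqueness argument used for two-sided Skorokhod problems, adapting the boundary comparisons so that the roles of $\alpha$ and $\beta$ are played by the strict monotonicity of $L(t,\cdot)$ and $R(t,\cdot)$.

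Suppose $(X^1,K^1)$ and $(X^2,K^2)$ are two solutions of $\mathbb{SP}_L^R(S)$, and write $K^i=K^{i,r}-K^{i,l}$. Since both share the same driver $S$, we have $X^1-X^2 = K^1-K^2$, so the difference is of bounded variation with $X^1_{0-}-X^2_{0-}=0$. The plan is to apply the integration-by-parts formula for BV c\`adl\`ag functions,
\begin{equation*}
(X^1_t-X^2_t)^2 = 2\int_{[0,t]}(X^1_s-X^2_s)\,d(K^1_s-K^2_s) - \sum_{0\le s\le t}(\Delta(K^1_s-K^2_s))^2,
\end{equation*}
and to prove that the integral on the right is nonpositive, which forces $X^1\equiv X^2$.

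For the integral, I would split $d(K^1-K^2) = (dK^{1,r}-dK^{2,r}) - (dK^{1,l}-dK^{2,l})$ and analyze each piece separately using the Skorokhod supports and strict monotonicity. On the support of $dK^{1,r}$ we have $R(s,X^1_s)=0$, while Definition \ref{def}(ii) gives $R(s,X^2_s)\ge 0$; since $R(s,\cdot)$ is strictly increasing this yields $X^1_s\le X^2_s$, so $(X^1_s-X^2_s)\,dK^{1,r}_s\le 0$. The symmetric argument on the support of $dK^{2,r}$ gives $(X^1_s-X^2_s)\,dK^{2,r}_s\ge 0$, and combining them,
\begin{equation*}
(X^1_s-X^2_s)(dK^{1,r}_s-dK^{2,r}_s)\le 0.
\end{equation*}
Using the strict monotonicity of $L(s,\cdot)$ in the same way for the $K^l$-terms, I get $(X^1_s-X^2_s)(dK^{1,l}_s-dK^{2,l}_s)\ge 0$, and altogether
\begin{equation*}
(X^1_s-X^2_s)\,d(K^1_s-K^2_s)\le 0.
\end{equation*}

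Plugging this back, $(X^1_t-X^2_t)^2\le 0$, so $X^1=X^2$ and hence $K^1=K^2$. To get uniqueness of the decomposition $K^r,K^l$, I would invoke Assumption \ref{ass1}(iv): at any $t$ with $L(t,X_t)=0$ we have $R(t,X_t)\ge \alpha>0$, so the two measures $dK^r$ and $dK^l$ live on disjoint sets, forcing the Jordan-type decomposition to be unique.

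The main delicate step is the integration-by-parts identity above: one must be careful that the integrand is $X^1_s-X^2_s$ (not $X^1_{s-}-X^2_{s-}$) so that the Skorokhod support conditions in Definition \ref{def}(iii), which are stated in terms of $X_s$, apply cleanly; this is precisely why the jump sum appears with a negative sign, and why the $K_{0-}=0$ convention in Remark \ref{rem1}(i) is needed to include $s=0$ in the integral.
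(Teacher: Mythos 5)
Your argument is correct, but it takes a genuinely different route from the paper. The paper proves uniqueness by contradiction via a ``last crossing time'': assuming $X_T>X'_T$, it sets $\tau=\sup\{t\in[0,T]:X_t\le X'_t\}$ and, in two cases (according to whether $X_\tau\le X'_\tau$ or not), uses the support conditions to show $K-K'$ cannot increase after $\tau$, contradicting the crossing; this follows Proposition 2.8 of Burdzy--Kang--Ramanan and, as the paper notes, needs only parts (i)--(ii) of Assumption \ref{ass1} together with $R\ge L$. You instead run the classical Tanaka-style energy argument: the integration-by-parts identity for the c\`adl\`ag BV function $K^1-K^2$ (with the jump sum subtracted, and the correct choice of $X^1_s-X^2_s$ rather than the left limit as integrand, so that the support conditions stated in terms of $X_s$ apply, including the atom at $s=0$ via $K_{0-}=0$), plus the sign analysis of $(X^1_s-X^2_s)$ on the supports of the four measures $dK^{i,r},dK^{i,l}$ coming from strict monotonicity of $R(s,\cdot)$ and $L(s,\cdot)$. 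Both arguments are sound; yours avoids the case analysis and is arguably more mechanical, while the paper's is more elementary (no Stieltjes calculus) and adapts directly to the \emph{extended} Skorokhod problem where no decomposition $K=K^r-K^l$ with the stated supports is available. Two minor points: the proposition only asserts uniqueness of the pair $(X,K)$, so your final step on uniqueness of the decomposition (via Assumption \ref{ass1}(iv) and disjointness of the supports) is a correct bonus rather than a needed step; and your sign analysis tacitly uses that $R(s,X^i_s)=0$ holds $dK^{i,r}$-a.e.\ (the support condition gives $R\le 0$ a.e.\ and Definition \ref{def}(ii) gives $R\ge 0$), which is worth stating explicitly.
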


\begin{proof}
Let $(X,K)$ and $(X',K')$ be two pairs of solution to the Skorokhod problem $\mathbb{SP}_L^R(S)$. By Remark \ref{rem1}, we must have $K_0=K'_0=[-(\Phi_0)^-]\vee \Psi_0$. Consequently, $X_0=X'_0$.

Suppose that there exists some $T> 0$, such that $X_T>X'_T$. Let
\begin{align*}
\tau=\sup\{t\in[0,T]: X_t\leq X'_t\}.
\end{align*}
Since $X_0=X'_0$, $\tau$ is well-defined. We have the following two cases.

\textbf{Case 1.} $X_\tau\leq X'_\tau$. In this case, for any $t\in(\tau,T]$, we have $X_t>X'_t$. It follows that for any $t\in(\tau,T]$,
\begin{align}\label{equ2.9'}
R(t,X_t)>R(t,X'_t)\geq 0\geq L(t,X_t)>L(t,X'_t). 
\end{align}
Using condition (iii) in Definition \ref{def}, we have
\begin{align*}
K_T-K_\tau=-(K^l_T-K^l_\tau)\leq 0\leq K^{\prime,r}_T-K^{\prime,r}_\tau=K'_T-K'_\tau.
\end{align*}
Therefore, we obtain that
\begin{align*}
0<X_T-X'_T=K_T-K'_T\leq K_\tau-K'_\tau=X_\tau-X'_\tau,
\end{align*}
which contradicts the case assumption.

\textbf{Case 2.} $X_\tau>X'_\tau$. Recalling that $X_0=X'_0$, we have $\tau>0$. Besides, the definition of $\tau$ yields that 
\begin{align}\label{equ2.10}
X_{\tau-}\leq X'_{\tau-}.
\end{align}
Furthermore, in this case, \eqref{equ2.9'} holds for $t=\tau$. Similar as the proof of Case 1, we have
\begin{align*}
K_\tau-K_{\tau-}=-(K^l_\tau-K^l_{\tau-})\leq 0\leq K^{\prime,r}_\tau-K^{\prime,r}_{\tau'}=K'_\tau-K'_{\tau'}.
\end{align*}
It follows that 
\begin{align*}
0<X_\tau-X'_\tau=K_\tau-K'_\tau\leq K_{\tau-}-K'_{\tau-}=X_{\tau-}-X'_{\tau-},
\end{align*}
which contradicts \eqref{equ2.10}. 

All the above analysis indicates that $X_T\leq X'_T$ for any $T\geq 0$. Similar argument implies that $X_T\geq X'_T$ for any $T\geq 0$. Hence, $X_T=X'_T$ and consequently, $K_T=K'_T$ for any $T\geq 0$.
\end{proof}

\begin{remark}
The proof of Proposition \ref{uniqueness} is valid if $L,R$ satisfy (i) and (ii) in Assumption \ref{ass1} and $R(t,x)\geq L(t,x)$ for any $(t,x)\in[0,\infty)\times \mathbb{R}$. 
\end{remark}

Now, we state the main result in this section.

\begin{theorem}\label{main}
Suppose that Assumption \ref{ass1} holds. Given $S\in D[0,\infty)$, set 
\begin{align}\label{K}
K_t=-\max\left((-\Phi_0)^+\wedge \inf_{r\in[0,t]}(-\Psi_r),\sup_{s\in[0,t]}\left[(-\Phi_s)\wedge \inf_{r\in[s,t]}(-\Psi_r)\right] \right),
\end{align}
and $X=S+K$. Then, $(X,K)$ is the unique solution to the Skorokhod problem $\mathbb{SP}_L^R(S)$.  
\end{theorem}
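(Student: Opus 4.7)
The plan is to reduce the nonlinear Skorokhod problem to the classical two-sided Skorokhod problem on a time-dependent interval, for which an explicit construction is already available (see \cite{BKR, KLR, S1}). Uniqueness comes for free from Proposition \ref{uniqueness}, so only existence must be established, namely that the pair $(X, K)$ with $K$ given by \eqref{K} and $X = S + K$ satisfies the three conditions of Definition \ref{def}.

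The first step is to translate the nonlinear constraints into interval constraints on $K$. Because $L(t, \cdot)$ and $R(t, \cdot)$ are strictly increasing with unique roots $S_t + \Phi_t$ and $S_t + \Psi_t$ respectively, condition (ii) of Definition \ref{def} is equivalent to $\Psi_t \leq K_t \leq \Phi_t$, and the events $\{L(s, X_s) < 0\}$, $\{R(s, X_s) > 0\}$ coincide with $\{K_s < \Phi_s\}$, $\{K_s > \Psi_s\}$. Thus $\mathbb{SP}_L^R(S)$ is equivalent to the standard two-sided Skorokhod problem for the input $S$ on the c\`adl\`ag time-dependent interval $[\alpha, \beta]$ with $\alpha_t := S_t + \Psi_t$ and $\beta_t := S_t + \Phi_t$. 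By Lemma \ref{lem}, $\alpha, \beta \in D[0, \infty)$ and $\inf_t(\beta_t - \alpha_t) = \inf_t(\Phi_t - \Psi_t) > 0$, which is precisely the separation hypothesis under which \cite{BKR} show that the extended Skorokhod problem has a unique solution coinciding with the Skorokhod problem solution.

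Next, I would recall the explicit formula from \cite{BKR},
\begin{align*}
K_t = -\max\left(\left[(S_0 - \beta_0)^+ \wedge \inf_{r \in [0,t]}(S_r - \alpha_r)\right], \sup_{s \in [0,t]}\left[(S_s - \beta_s) \wedge \inf_{r \in [s,t]}(S_r - \alpha_r)\right]\right),
\end{align*}
and substitute $S_t - \beta_t = -\Phi_t$ and $S_t - \alpha_t = -\Psi_t$ to recover exactly formula \eqref{K}. The complementarity conditions from \cite{BKR}, namely that the decreasing part of $K$ grows only on $\{K = \beta - S\} = \{K = \Phi\}$ and the increasing part only on $\{K = \alpha - S\} = \{K = \Psi\}$, translate verbatim into condition (iii) of Definition \ref{def} via the identifications made in the first step.

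The main obstacle, should one prefer a self-contained argument rather than citing \cite{BKR}, is to verify the complementarity and bounded-variation properties directly from \eqref{K}. This requires a careful case analysis of the outer maximum, distinguishing when it is attained by the ``initial'' term $(-\Phi_0)^+ \wedge \inf_{r \in [0,t]}(-\Psi_r)$ versus by the running supremum over $s \in (0, t]$; identifying the times of increase of $K^r$ and $K^l$ through the inner sup--inf structure; and handling the possible initial jump consistently with Remark \ref{rem1}(i), where the presence of $(-\Phi_0)^+$ accounts for the case $\Phi_0 \geq 0$ in which no downward push is needed at time $0$.
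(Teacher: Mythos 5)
Your reduction is correct, but it is a genuinely different route from the one the paper takes. You observe that, since $L(t,\cdot)$ and $R(t,\cdot)$ are strictly increasing with roots $S_t+\Phi_t$ and $S_t+\Psi_t$, condition (ii) of Definition \ref{def} is equivalent to $\alpha_t\leq X_t\leq\beta_t$ with $\alpha:=S+\Psi$, $\beta:=S+\Phi$, and the sets $\{L(s,X_s)<0\}$, $\{R(s,X_s)>0\}$ coincide with $\{X_s<\beta_s\}$, $\{X_s>\alpha_s\}$; hence $\mathbb{SP}_L^R(S)$ is definitionally identical to the two-sided Skorokhod problem for $S$ on the time-dependent interval $[\alpha,\beta]$, and Lemma \ref{lem} supplies exactly the hypotheses ($\alpha,\beta\in D[0,\infty)$, $\inf_t(\beta_t-\alpha_t)>0$) under which the explicit formula of Burdzy--Kang--Ramanan applies and the extended and ordinary problems coincide; substituting $-\Phi=S-\beta$, $-\Psi=S-\alpha$ recovers \eqref{K}. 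This is sound and considerably shorter. The paper instead gives a self-contained construction: it proves that $K$ in \eqref{K} is c\`adl\`ag (Lemma \ref{RCLL}), introduces the alternating times $\sigma_k,\tau_k$ in \eqref{eq2.8}--\eqref{eq2.9'}, shows $K$ is piecewise a running supremum of $-\Phi$ or a running infimum of $-\Psi$ (Propositions \ref{prop2.2}, \ref{prop2.3}, Theorem \ref{thm2.2}), and derives the complementarity conditions directly (Theorem \ref{thm2.3}). What the longer route buys is precisely the intermediate structure that the rest of the paper leans on: the representation \eqref{eq2.21}--\eqref{eq2.22}, the identities $K_{\sigma_k}=\Phi_{\sigma_k}$, $K_{\tau_k}=\Psi_{\tau_k}$ of Remark \ref{rem}, and the implications \eqref{eq3.33}--\eqref{eq3.34}, all of which are reused in the comparison and continuity arguments of Section 3. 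If you take the citation route you should state precisely which result of \cite{BKR} you invoke (the explicit ESP formula together with the corollary that under $\inf_t(\beta_t-\alpha_t)>0$ the ESP solution has bounded variation and solves the SP), but no mathematical gap remains.
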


\begin{remark}
	For the case as in Remark \ref{rem1} (iv), i.e., $L(t,x)=x-r_t$, $R(t,x)=x-l_t$, we  have $\Phi_t=r_t-S_t$, $\Psi_t=l_t-S_t$ and thus,
	\begin{align*}
		K_t=-\max\left((S_0-r_0)^+\wedge \inf_{r\in[0,t]}(S_r-l_r),\sup_{s\in[0,t]}\left[(S_s-r_s)\wedge \inf_{r\in[s,t]}(S_r-l_r)\right] \right),
	\end{align*}
which coincides with the results in \cite{BKR}, \cite{S1}.
\end{remark}


The proof of Theorem \ref{main} will be divided into several lemmas. We first show that the function $K$ defined by \eqref{K} is right-continuous with left limits. The proof needs the following observation as used in \cite{BKR} and \cite{S1}: $\phi\in D[0,\infty)$ if and only if the following two conditions hold for any $\varepsilon>0$:
\begin{itemize}
\item[(i)] for each $\theta_1\geq 0$, there exists some $\theta_2>\theta_1$ such that $\sup_{t_1,t_2\in[\theta_1,\theta_2)}|\phi_{t_1}-\phi_{t_2}|\leq \varepsilon$; 
\item[(i)] for each $\theta_2> 0$, there exists some $0\leq \theta_1<\theta_1$ such that $\sup_{t_1,t_2\in[\theta_1,\theta_2)}|\phi_{t_1}-\phi_{t_2}|\leq \varepsilon$.
\end{itemize}

\begin{lemma}\label{RCLL}
If $\Phi,\Psi\in D[0,\infty)$, then $K$ defined by \eqref{K} belongs to $D[0,\infty)$.
\end{lemma}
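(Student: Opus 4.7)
The plan is to use the oscillation characterization of $D[0,\infty)$ stated just before the lemma, applied to the decomposition $K_t = -\max(A_t,B_t)$, where
\[
A_t := (-\Phi_0)^+ \wedge \inf_{r\in[0,t]}(-\Psi_r), \qquad B_t := \sup_{s\in[0,t]}\bigl[(-\Phi_s)\wedge \inf_{r\in[s,t]}(-\Psi_r)\bigr].
\]
Since $\Phi,\Psi\in D[0,\infty)$, given any $\varepsilon>0$, for each $\theta_1\ge 0$ I can pick $\theta_2>\theta_1$ such that the oscillations of both $\Phi$ and $\Psi$ on $[\theta_1,\theta_2)$ are at most $\varepsilon$, and symmetrically for left-neighborhoods of any $\theta_2>0$. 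The goal reduces to showing that $|A_{t_1}-A_{t_2}|\le\varepsilon$ and $|B_{t_1}-B_{t_2}|\le\varepsilon$ for every $t_1<t_2$ in such an interval, since $|\max(a,b)-\max(a',b')|\le\max(|a-a'|,|b-b'|)$ will then give the same bound for $K$.

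For $A$, observe that $t\mapsto \inf_{r\in[0,t]}(-\Psi_r)$ is non-increasing, and for $t\in[\theta_1,\theta_2)$ it can only decrease below $-\Psi_{\theta_1}$ by at most the oscillation of $-\Psi$ on $[\theta_1,\theta_2)$, hence by at most $\varepsilon$. Together with the elementary estimate $|(u\wedge c)-(v\wedge c)|\le|u-v|$, this yields $|A_{t_1}-A_{t_2}|\le\varepsilon$.

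The main work, and the only genuine obstacle, is $B$. Write $g(s,t):=(-\Phi_s)\wedge \inf_{r\in[s,t]}(-\Psi_r)$, so that $B_t=\sup_{s\in[0,t]}g(s,t)$, and split
\[
B_{t_2}=\max\Bigl(\sup_{s\in[0,t_1]}g(s,t_2),\ \sup_{s\in[t_1,t_2]}g(s,t_2)\Bigr).
\]
For $s\in[0,t_1]$ I will use the identity $g(s,t_2)=g(s,t_1)\wedge \inf_{r\in[t_1,t_2]}(-\Psi_r)$; since $g(s,t_1)\le-\Psi_{t_1}$ and $\inf_{r\in[t_1,t_2]}(-\Psi_r)\ge-\Psi_{t_1}-\varepsilon$, a two-case analysis gives $|g(s,t_2)-g(s,t_1)|\le\varepsilon$ uniformly in $s\in[0,t_1]$, whence the first sup lies in $[B_{t_1}-\varepsilon,B_{t_1}]$. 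For $s\in[t_1,t_2]$ I will use $g(s,t_2)\le-\Phi_s\le-\Phi_{t_1}+\varepsilon$ together with $B_{t_1}\ge g(t_1,t_1)=(-\Phi_{t_1})\wedge(-\Psi_{t_1})=-\Phi_{t_1}$ (by Lemma~\ref{lem}), to conclude this second sup is $\le B_{t_1}+\varepsilon$. Combining gives $B_{t_1}-\varepsilon\le B_{t_2}\le B_{t_1}+\varepsilon$.

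Applying the characterization stated before the lemma to both $\theta_1$ (right-neighborhood) and $\theta_2$ (left-neighborhood) with these oscillation bounds yields $K\in D[0,\infty)$. The only subtlety worth careful accounting is the manipulation of $\sup\wedge\inf$ on overlapping intervals in the argument for $B$; the rest is bookkeeping.
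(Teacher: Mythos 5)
Your proposal is correct and follows essentially the same route as the paper: the same decomposition of $K$ into the two terms $A$ and $B$ (the paper's $H$ and $C$), the same split of $\sup_{s\in[0,t_2]}$ into $s\in[0,t_1]$ and $s\in(t_1,t_2]$, the same elementary max/min inequalities, and the same oscillation characterization of $D[0,\infty)$; the only cosmetic differences are that you bound the second piece via $B_{t_1}\ge(-\Phi_{t_1})\wedge(-\Psi_{t_1})=-\Phi_{t_1}$ using Lemma \ref{lem}, where the paper invokes the general inequality $(-\Phi_t)\wedge(-\Psi_t)\le C(t)\le-\Psi_t$, and that your bounds are phrased in terms of $\varepsilon$ rather than the sum of oscillations. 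No gaps.
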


\begin{proof}
For any $0\leq s\leq t$, set 
\begin{align*}
&H^{\Phi,\Psi}(t):=(-\Phi_0)^+\wedge \inf_{r\in[0,t]}(-\Psi_r)\\
&R^{\Phi,\Psi}_t(s):=(-\Phi_s)\wedge \inf_{r\in[s,t]}(-\Psi_r),\\
&C^{\Phi,\Psi}(t):=\sup_{s\in[0,t]}R^{\Phi,\Psi}_t(s).
\end{align*}
For simplicity, we always omit the superscript $\Phi,\Psi$. It is easy to check that
\begin{align}\label{equa2.2}
(-\Phi_t)\wedge(-\Psi_t)\leq C(t)\leq -\Psi_t.
\end{align}
For any $0\leq \theta_1<\theta_2$, let $t_1,t_2$ be in $[\theta_1,\theta_2)$ with $t_1\leq t_2$ and 
\begin{align*}
a:=\sup_{s,u\in[\theta_1,\theta_2)}|\Phi_s-\Phi_u|+\sup_{s,u\in[\theta_1,\theta_2)}|\Psi_s-\Psi_u|.
\end{align*}
Then, for any $s\in[0,t_1]$, we have $R_{t_2}(s)\leq R_{t_1}(s)$ and
\begin{align*}
\sup_{s\in(t_1,t_2]}R_{t_2}(s)\leq \sup_{s\in(t_1,t_2]}(-\Phi_s)\wedge(-\Psi_s)\leq (-\Phi_{t_1})\wedge(-\Psi_{t_1})+a.
\end{align*}
It follows that 
\begin{align*}
C(t_2)=&\sup_{s\in[0,t_1]}R_{t_2}(s)\vee\sup_{s\in(t_1,t_2]}R_{t_2}(s)\\
\leq &\sup_{s\in[0,t_1]}R_{t_1}(s)\vee[(-\Phi_{t_1})\wedge(-\Psi_{t_1})+a]\\
\leq &\sup_{s\in[0,t_1]}R_{t_1}(s)+a=C(t_1)+a.
\end{align*}
On the other hand, noting \eqref{equa2.2}, we obtain that 
\begin{align*}
C(t_1)-a\leq &C(t_1)\wedge(-\Psi_{t_1}-a)\\
\leq &\sup_{s\in[0,t_1]}[R_{t_1}(s)\wedge \inf_{r\in(t_1,t_2]}(-\Psi_r)]\\
=&\sup_{s\in[0,t_1]}R_{t_2}(s)\leq \sup_{s\in[0,t_2]}R_{t_2}(s)=C(t_2).
\end{align*}
All the above analysis indicates that $|C(t_1)-C(t_2)|\leq a$. Besides, simple calculation yields that
\begin{align*}
H(t_2)\leq & H(t_1)=H(t_1)\wedge(-\Psi_{t_1}-a+a)\\
\leq &H(t_1)\wedge (-\Psi_{t_1}-a)+a\\
\leq &H(t_1)\wedge\inf_{s\in(t_1,t_2]}(-\Psi_s)+a=H(t_2)+a,
\end{align*}
which indicates that $|H(t_1)-H(t_2)|\leq a$. It is easy to check that the following inequality holds for any $x_i,y_i\in\mathbb{R}$, $i=1,2$,
\begin{align*}
|x_1\vee x_2-y_1\vee y_2|\leq |x_1-x_2|\vee|y_1-y_2|.
\end{align*}
Then, we obtain that 
\begin{align*}
|K_{t_1}-K_{t_2}|\leq |H(t_1)-H(t_2)|\vee |C(t_1)-C(t_2)|\leq a.
\end{align*}
Since $t_1\leq t_2$ are arbitrarily chosen in $[\theta_1,\theta_2)$, we deduce that 
\begin{align*}
\sup_{t_1,t_2\in[\theta_1,\theta_2)}|K_{t_1}-K_{t_2}|\leq \sup_{s,u\in[\theta_1,\theta_2)}|\Phi_s-\Phi_u|+\sup_{s,u\in[\theta_1,\theta_2)}|\Psi_s-\Psi_u|.
\end{align*}
Consequently, $K\in D[0,\infty)$.
\end{proof}

\begin{remark}
The proof of Lemma \ref{RCLL} also shows that the oscillation of $K$ can be dominated by the oscillation of $\Phi$ and $\Psi$ on closed interval $[\theta_1,\theta_2]$, i.e.,
\begin{align*}
\sup_{t_1,t_2\in[\theta_1,\theta_2]}|K_{t_1}-K_{t_2}|\leq \sup_{s,u\in[\theta_1,\theta_2]}|\Phi_s-\Phi_u|+\sup_{s,u\in[\theta_1,\theta_2]}|\Psi_s-\Psi_u|.
\end{align*}
Therefore, if $\Phi,\Psi\in C[0,\infty)$, we have $K\in C[0,\infty)$. Under the assumption as in Remark \ref{continuity of phipsi}, for any given $S\in C[0,\infty)$, each component of solution $(X,K)$ to the Skorokhod problem $\mathbb{SP}_L^R(S)$ is continuous.
\end{remark}

Now, we define the following pair of times
\begin{align}\label{eq2.1}
\sigma^*=\inf\{t>0|\Phi_t\leq 0\}, \ \tau^*=\inf\{t>0|\Psi_t\geq 0\}.
\end{align}

\begin{remark}\label{rem2.2}
(i) Noting that $a:=\inf_t(\Phi_t-\Psi_t)>0$ as shown in Lemma \ref{lem}, there are three cases
\begin{align}\label{eq2.6}
\textrm{either } \sigma^*=\tau^*=\infty, \ \sigma^*<\tau^* \textrm{ or } \sigma^*>\tau^*.
\end{align}
Especially, for the case $\sigma^*=\tau^*=\infty$, we have $\Psi_t<0<\Phi_t$ for any $t\geq 0$. Consequently, $K_t= 0$ and $L(t,S_t)< 0< R(t,S_t)$ for any $t$. Hence, $(S,0)$ is the solution to the  Skorokhod problem $\mathbb{SP}_L^R(S)$. In the following of this section, we only consider the other two cases.

\noindent (ii) For any $t\in[0,\sigma^*\wedge\tau^*)$, we have $\Psi_t<0<\Phi_t$. It follows that $K_t=0$ when $t\in[0,\sigma^*\wedge\tau^*)$.
\end{remark}

For the case $\tau^*>\sigma^*$, we set $\tau_0=0$, $\sigma_0=\sigma^*$ and for $k\geq 1$, we set
\begin{equation}\label{eq2.8}
\tau_k=\min\{ t>\sigma_{k-1}| \inf_{s\in[\sigma_{k-1},t]}\Phi_s\leq \Psi_s\}
\end{equation}
and 
\begin{equation}\label{eq2.9'}
\sigma_k=\min\{ t>\tau_{k}| \sup_{s\in[\tau_k,t]}\Psi_s\geq \Phi_s\}.
\end{equation}
Since $\Phi,\Psi$ are right continuous, $\tau_k$, $\sigma_k$ are well-defined. For the case $\tau^*<\sigma^*$, we set $\tau_0=\tau^*$ and define $\sigma_k$ by \eqref{eq2.9'} for all $k\geq 0$, and we define $\tau_k$ by \eqref{eq2.8} for all $k\geq 1$.

It is easy to check that  for both cases $\tau^*<\sigma^*$ and $\tau^*>\sigma^*$,  the following two inequalities hold for $k\geq 1$,
\begin{equation}\label{eq2.10}
\inf_{s\in[\sigma_{k-1},t]}\Phi_s>\Psi_t \textrm{ for any } t\in[\sigma_{k-1},\tau_k),
\end{equation}
\begin{equation}\label{eq2.11}
\inf_{s\in[\sigma_{k-1},\tau_k]}\Phi_s\leq \Psi_{\tau_k},
\end{equation}
Furthermore, the following two inequalities hold for any $k\geq 1$ if $\tau^*>\sigma^*$ and hold for any $k\geq0$ if $\tau^*<\sigma^*$,
\begin{equation}\label{eq2.12}
\Phi_t>\sup_{s\in[\tau_k,t]}\Psi_s \textrm{ for any } t\in[\tau_k,\sigma_k),
\end{equation}
\begin{equation}\label{eq2.13}
\Phi_{\sigma_k}\leq \sup_{s\in[\tau_k,\sigma_k]}\Psi_s.
\end{equation}
Finally, for the case $\tau^*>\sigma^*$, we have $\Psi_t\leq 0$ for any $t\in[0,\sigma_0]$ and 
\begin{equation}\label{eq2.9}
\Phi_{\sigma_0}\leq 0.
\end{equation}
For the case $\tau^*<\sigma^*$, we have $\Phi_t\geq 0$ for any $t\in[0,\tau_0]$ and 
\begin{equation}\label{eq2.9''}
\Psi_{\tau_0}\geq 0.
\end{equation}
By \eqref{eq2.10}, it is easy to check that $\Phi_s>\Psi_t$ for any $\sigma_{k-1}\leq s\leq t<\tau_k$, $k\geq 1$. It follows that 
\begin{equation}\label{eq2.14}
\Phi_s>\sup_{t\in[s,\tau_k)}\Psi_t \textrm{ for any } s\in[\sigma_{k-1},\tau_k).
\end{equation}

It is clear that \begin{align*}
	0\leq \tau_0\leq \sigma_0<\tau_1<\sigma_1<\tau_2<\sigma_2<\cdots.
\end{align*}
We first show that $\tau_{k}$, $\sigma_{k}$ approach infinity as $k$ goes to infinity.

\begin{proposition}\label{prop2.1}
	Under Assumption \ref{ass1}, we have
	\begin{align*}
		\lim_{k\rightarrow \infty}\tau_k=\lim_{k\rightarrow \infty}\sigma_k=\infty.
	\end{align*}
\end{proposition}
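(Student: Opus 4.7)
My plan is to argue by contradiction. The sequences interlace strictly, with $\sigma_{k-1} < \tau_k < \sigma_k$ for every $k \geq 1$, so if either sequence has a finite limit then both increase monotonically to a common value $T \in (0, \infty)$. I would assume this and work toward a contradiction.

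The key ingredients are already in place: by Lemma~\ref{lem}, $\Phi, \Psi \in D[0,\infty)$ and they satisfy the uniform gap $\delta := \inf_{t \geq 0}(\Phi_t - \Psi_t) \geq \alpha / C > 0$, where $\alpha := \inf_{(t,x)}(R(t,x) - L(t,x))$. Since both $\Phi$ and $\Psi$ are c\`{a}dl\`{a}g, the left limits $\Phi_{T-}$ and $\Psi_{T-}$ exist, and passing the gap to the limit gives $\Phi_{T-} - \Psi_{T-} \geq \delta > 0$.

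Next I would apply the defining inequality~\eqref{eq2.11}, namely $\inf_{s \in [\sigma_{k-1}, \tau_k]} \Phi_s \leq \Psi_{\tau_k}$. Since $\sigma_{k-1}, \tau_k \uparrow T$, for any $\varepsilon > 0$ the interval $[\sigma_{k-1}, \tau_k]$ eventually lies in a left neighborhood of $T$ on which $|\Phi_s - \Phi_{T-}| < \varepsilon$, so $\inf_{s \in [\sigma_{k-1}, \tau_k]} \Phi_s \geq \Phi_{T-} - \varepsilon$ for all large $k$. Combined with $\Psi_{\tau_k} \to \Psi_{T-}$, passing to the limit and then sending $\varepsilon \to 0$ yields $\Phi_{T-} \leq \Psi_{T-}$, contradicting the positive gap.

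The main subtlety is justifying the convergence of the infimum over the shrinking interval $[\sigma_{k-1}, \tau_k]$ to $\Phi_{T-}$; this rests solely on the existence of the left limit and is essentially routine. The only other thing to verify is that~\eqref{eq2.11} is available for both cases $\tau^* > \sigma^*$ and $\tau^* < \sigma^*$ starting from $k \geq 1$, which is precisely what the excerpt establishes, so the same contradiction works in both regimes.
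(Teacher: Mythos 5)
Your argument is correct and follows essentially the same route as the paper: argue by contradiction using the strict interlacing, the uniform gap $\inf_t(\Phi_t-\Psi_t)>0$ from Lemma~\ref{lem}, inequality~\eqref{eq2.11}, and the existence of left limits of $\Phi,\Psi$. The only cosmetic difference is that the paper phrases the contradiction as $\Psi$ failing to have a left limit at the common limit time (via a near-minimizer $\rho_k$), whereas you pass directly to the left limits and contradict the gap there; both hinge on exactly the same ingredients.
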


\begin{proof}
	The proof is similar with the one for Proposition 3.1 in \cite{KLR}. For readers' convenience, we give a short proof here. By Lemma \ref{lem}, $a:=\inf_{t\geq 0}(\Phi_t-\Psi_t)>0$. Suppose that 
	\begin{align*}
		\lim_{k\rightarrow \infty}\tau_k=\lim_{k\rightarrow \infty}\sigma_k=t^*<\infty.
	\end{align*}
	For any $k\geq 1$, there exists some $\rho_k\in[\sigma_{k-1},\tau_k]$, such that 
	\begin{align*}
		\inf_{t\in[\sigma_{k-1},\tau_k]}\Phi_t\geq \Phi_{\rho_k}-\frac{a}{2}.
	\end{align*}
	Recalling \eqref{eq2.11} and the definition of $a$, we have
	\begin{align*}
		\Psi_{\tau_k}\geq \Psi_{\rho_k}+\frac{a}{2}.
	\end{align*}
	Letting $k\rightarrow \infty$, it follows that $\Psi$ does not have left limit at $t^*$, which is a contradiction. Therefore, we have
	\begin{align*}
		\lim_{k\rightarrow \infty}\tau_k=\lim_{k\rightarrow \infty}\sigma_k=\infty.
	\end{align*}
\end{proof}

By (iii) in Definition \ref{def}, the second component of solution to the Skorokhod problem with two nonlinear reflecting boundaries is of bounded variation. In the following two propositions, we show that $K$ is piecewise monotone. Therefore, $K$ defined by \eqref{K} is a bounded variation function.

\begin{proposition}\label{prop2.2}
Under Assumption \ref{ass1}, for any $k\geq 1$ and $t\in[\sigma_{k-1},\tau_k)$, we have
\begin{align*}
-K_t=\sup_{s\in[\sigma_{k-1},t]}(-\Phi_s).
\end{align*}
\end{proposition}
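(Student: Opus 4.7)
My plan is to establish both inequalities $-K_t \geq M_t$ and $-K_t \leq M_t$, where $M_t := \sup_{s \in [\sigma_{k-1}, t]}(-\Phi_s)$, using the decomposition $-K_t = H(t) \vee C(t)$ from the proof of Lemma \ref{RCLL}.

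First I would show $-K_t \geq M_t$. By \eqref{eq2.14}, for any $s \in [\sigma_{k-1}, t]$ one has $\Phi_s > \sup_{r \in [s, \tau_k)} \Psi_r \geq \sup_{r \in [s, t]} \Psi_r$ since $t < \tau_k$; therefore $-\Phi_s < \inf_{r \in [s, t]}(-\Psi_r)$ and so $R_t(s) = -\Phi_s$. Taking the supremum over $s \in [\sigma_{k-1}, t]$ yields $C(t) \geq M_t$, and hence $-K_t = H(t) \vee C(t) \geq M_t$.

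For the reverse inequality, since $\sigma_{k-1} \in [\sigma_{k-1}, t]$ we have $-\Phi_{\sigma_{k-1}} \leq M_t$, so it suffices to show $H(t) \leq -\Phi_{\sigma_{k-1}}$ and $R_t(s) \leq -\Phi_{\sigma_{k-1}}$ for every $s \in [0, \sigma_{k-1})$. When $s \in [0, \tau_{k-1}]$ (and likewise for $H(t)$, whose $\inf$-factor starts at $0$), the inclusion $[\tau_{k-1}, \sigma_{k-1}] \subset [s, t]$ combined with \eqref{eq2.13} gives
\[
\inf_{r \in [s, t]}(-\Psi_r) \leq -\sup_{v \in [\tau_{k-1}, \sigma_{k-1}]} \Psi_v \leq -\Phi_{\sigma_{k-1}},
\]
which immediately yields the desired bound on $R_t(s)$ and on $H(t)$.

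The main obstacle is the range $s \in (\tau_{k-1}, \sigma_{k-1})$, where $[\tau_{k-1}, \sigma_{k-1}]$ is not contained in $[s, t]$ and the inequality chain above fails. I would handle it by a case split on the position of a near-maximizer of $\Psi$ on $[\tau_{k-1}, \sigma_{k-1}]$: by \eqref{eq2.13}, for each $\varepsilon > 0$ we pick $v_\varepsilon \in [\tau_{k-1}, \sigma_{k-1}]$ with $\Psi_{v_\varepsilon} > \Phi_{\sigma_{k-1}} - \varepsilon$. If $v_\varepsilon \geq s$, then $v_\varepsilon \in [s, t]$ and $\inf_{r \in [s, t]}(-\Psi_r) \leq -\Psi_{v_\varepsilon} < -\Phi_{\sigma_{k-1}} + \varepsilon$; if $v_\varepsilon < s$, then \eqref{eq2.12} applied at $u = s$ gives $\Phi_s > \sup_{w \in [\tau_{k-1}, s]} \Psi_w \geq \Psi_{v_\varepsilon} > \Phi_{\sigma_{k-1}} - \varepsilon$, whence $-\Phi_s < -\Phi_{\sigma_{k-1}} + \varepsilon$. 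In either case $R_t(s) < -\Phi_{\sigma_{k-1}} + \varepsilon$, and sending $\varepsilon \downarrow 0$ gives $R_t(s) \leq -\Phi_{\sigma_{k-1}}$. The borderline case $k=1$ with $\tau^* > \sigma^*$, in which \eqref{eq2.13} is unavailable at index zero, is dispatched directly from \eqref{eq2.9}: on $[0, \sigma_0)$ one has $\Phi_s \geq 0$ while $\Phi_{\sigma_0} \leq 0$, so $R_t(s) \leq -\Phi_s \leq 0 \leq -\Phi_{\sigma_{k-1}}$ and the bound on $H(t)$ follows likewise.
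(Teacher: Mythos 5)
Your proof is correct and takes essentially the same approach as the paper: both arguments split the supremum defining $-K_t$ over the ranges $[0,\tau_{k-1}]$, $(\tau_{k-1},\sigma_{k-1})$ and $[\sigma_{k-1},t]$ (plus the initial term $H(t)$), use \eqref{eq2.14} to identify the last piece with $\sup_{s\in[\sigma_{k-1},t]}(-\Phi_s)$, and dominate the remaining pieces by $-\Phi_{\sigma_{k-1}}$ via \eqref{eq2.13} and \eqref{eq2.12}, with the borderline case $k=1$, $\tau^*>\sigma^*$ handled separately through \eqref{eq2.9}. The only cosmetic difference is on $(\tau_{k-1},\sigma_{k-1})$, where the paper combines $\inf_{r\in[\tau_{k-1},s]}(-\Psi_r)$ and $\inf_{r\in[s,t]}(-\Psi_r)$ into $\inf_{r\in[\tau_{k-1},t]}(-\Psi_r)$ while you argue with an $\varepsilon$-near-maximizer of $\Psi$; both are valid.
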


\begin{proof}
The proof is similar with the one for Lemma 2.7 in \cite{S2}. For readers' convenience, we give a short proof here. For any $k\geq 1$ and $t\in[\sigma_{k-1},\tau_k)$, set 
\begin{align*}
I^1_t&=(-\Phi_0)^+\wedge \inf_{r\in[0,t]}(-\Psi_r),\\
I^{2,k-1}_t&=\sup_{s\in[0,\tau_{k-1}]}\left[(-\Phi_s)\wedge \inf_{r\in[s,t]}(-\Psi_r)\right],\\
I^{3,k-1}_t&=\sup_{s\in[\tau_{k-1},\sigma_{k-1})}\left[(-\Phi_s)\wedge \inf_{r\in[s,t]}(-\Psi_r)\right],\\
I^{4,k-1}_t&=\sup_{s\in[\sigma_{k-1},t]}\left[(-\Phi_s)\wedge \inf_{r\in[s,t]}(-\Psi_r)\right].
\end{align*}
It is obvious that $-K_t=I^1_t\vee I^{2,k-1}_t\vee I^{3,k-1}_t\vee I^{4,k-1}_t$. 

\textbf{Case 1.} $k\geq 2$ if $\tau^*>\sigma^*$ and $k\geq 1$ if $\tau^*<\sigma^*$. By \eqref{eq2.13}, it follows that 
\begin{align}\label{eq2.20}
\inf_{s\in[\tau_{k-1},\sigma_{k-1}]}(-\Psi_s)\leq -\Phi_{\sigma_{k-1}}\leq \sup_{s\in[\sigma_{k-1},t]}(-\Phi_s).
\end{align}
Therefore, we have
\begin{align*}
I^1_t&\leq \inf_{r\in[0,t]}(-\Psi_r)\leq \inf_{s\in[\tau_{k-1},\sigma_{k-1}]}(-\Psi_s)\leq \sup_{s\in[\sigma_{k-1},t]}(-\Phi_s),\\
I^{2,k-1}_t&\leq \sup_{s\in[0,\tau_{k-1}]} \inf_{r\in[s,t]}(-\Psi_r)\leq \inf_{s\in[\tau_{k-1},\sigma_{k-1}]}(-\Psi_s)\leq \sup_{s\in[\sigma_{k-1},t]}(-\Phi_s).
\end{align*}
Recalling \eqref{eq2.12}, for any $t\in[\tau_{k-1},\sigma_{k-1})$, we have $-\Phi_t<\inf_{r\in[\tau_{k-1},t]}(-\Psi_r)$. Then we obtain that for any $t\in[\sigma_{k-1},\tau_k)$,
\begin{align*}
I^{3,k-1}_t&\leq \sup_{s\in[\tau_{k-1},\sigma_{k-1})}\left[\inf_{r\in[\tau_{k-1},t]}(-\Psi_r)\wedge \inf_{r\in[s,t]}(-\Psi_r)\right]\\
&\leq \inf_{r\in[\tau_{k-1},\sigma_{k-1}]}(-\Psi_r)\leq \sup_{s\in[\sigma_{k-1},t]}(-\Phi_s),
\end{align*}
where we have used \eqref{eq2.20} in the last inequality. It follows from \eqref{eq2.14} that for any $k\geq 1$
\begin{align}\label{eq2.14'}
I^{4,k-1}_t=\sup_{s\in[\sigma_{k-1},t]}(-\Phi_s).
\end{align}
Then, in this case, $-K_t=\sup_{s\in[\sigma_{k-1},t]}(-\Phi_s)$.

\textbf{Case 2.} $k=1$ if $\tau^*>\sigma^*$. In this case, for any $t\in[0,\sigma_0)$,  we have $\Psi_t\leq 0\leq \Phi_t$, $\Psi_{\sigma_0}\leq 0$, $\Phi_{\sigma_0}\leq 0$. Therefore, it is easy to check that for any $t\in[\sigma_0,\tau_1)$,
\begin{align*}
&I^1_t\leq (-\Phi_0)^+=0, \ I^{2,0}_t=(-\Phi_0)\wedge (-\Psi_0)\leq 0,\\
&I^{3,0}_t=\sup_{s\in[0,\sigma_0)}[(-\Phi_s)\wedge \inf_{r\in[s,t]}(-\Psi_r)]\leq 
\sup_{s\in[0,\sigma_0)}[(-\Phi_s)\wedge \inf_{r\in[s,\sigma_0]}(-\Psi_r)]=\sup_{s\in[0,\sigma_0)}(-\Phi_s)\leq 0.
\end{align*}
Recalling \eqref{eq2.14'}, we have $I^{4,0}_t=\sup_{s\in[\sigma_0,t]}(-\Phi_s)\geq -\Phi_{\sigma_0}\geq 0$. Therefore, in this case, $-K_t=\sup_{s\in[\sigma_0,t]}(-\Phi_s)$. The proof is complete.
\end{proof}

\begin{proposition}\label{prop2.3}
Suppose Assumption \ref{ass1} holds. If $k\geq 1$ or $\tau^*<\sigma^*$ and $k=0$, then for any  $t\in[\tau_k,\sigma_k)$, we have
\begin{align*}
-K_t=\inf_{s\in[\tau_{k},t]}(-\Psi_s).
\end{align*}
\end{proposition}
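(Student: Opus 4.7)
My plan is to mirror the proof of Proposition~\ref{prop2.2}, interchanging the roles of $\Phi,\Psi$ and of $\sigma,\tau$. Fix $t\in[\tau_k,\sigma_k)$ and split the relevant supremum in formula~\eqref{K} at the two break times $\sigma_{k-1}$ and $\tau_k$ into four pieces:
\begin{align*}
I^1_t &= (-\Phi_0)^+ \wedge \inf_{r\in[0,t]}(-\Psi_r), \\
I^{2,k}_t &= \sup_{s\in[0,\sigma_{k-1}]}\bigl[(-\Phi_s)\wedge \inf_{r\in[s,t]}(-\Psi_r)\bigr], \\
I^{3,k}_t &= \sup_{s\in[\sigma_{k-1},\tau_k)}\bigl[(-\Phi_s)\wedge \inf_{r\in[s,t]}(-\Psi_r)\bigr], \\
I^{4,k}_t &= \sup_{s\in[\tau_k,t]}\bigl[(-\Phi_s)\wedge \inf_{r\in[s,t]}(-\Psi_r)\bigr],
\end{align*}
so that $-K_t = I^1_t \vee I^{2,k}_t \vee I^{3,k}_t \vee I^{4,k}_t$. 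The goal is to show $I^{3,k}_t = \inf_{r\in[\tau_k,t]}(-\Psi_r)$ and that the other three pieces are dominated by this value.

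For $I^{3,k}_t$: on $[\sigma_{k-1},\tau_k)$ inequality~\eqref{eq2.14} gives $-\Phi_s < \inf_{r\in[s,\tau_k)}(-\Psi_r)$, so splitting $\inf_{r\in[s,t]}(-\Psi_r) = \inf_{r\in[s,\tau_k)}(-\Psi_r) \wedge \inf_{r\in[\tau_k,t]}(-\Psi_r)$ collapses the first infimum out of the minimum with $-\Phi_s$; pulling the remaining constant outside the sup yields
\[
I^{3,k}_t = \Bigl[\sup_{s\in[\sigma_{k-1},\tau_k)}(-\Phi_s)\Bigr] \wedge \inf_{r\in[\tau_k,t]}(-\Psi_r).
\]
For $I^{4,k}_t$: on $[\tau_k,t]$ inequality~\eqref{eq2.12} gives $-\Phi_s < \inf_{r\in[\tau_k,s]}(-\Psi_r)$, and the elementary fact ``$a<A$ implies $a\wedge B \leq A\wedge B$'' applied with $B=\inf_{r\in[s,t]}(-\Psi_r)$ gives $(-\Phi_s)\wedge B \leq \inf_{r\in[\tau_k,t]}(-\Psi_r)$. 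For $I^{2,k}_t$ and $I^1_t$: drop $-\Phi_s$ via $a\wedge b \leq b$, use that $s\mapsto \inf_{r\in[s,t]}(-\Psi_r)$ is nondecreasing, and invoke the inclusion $[\sigma_{k-1},t]\supset[\tau_k,t]$ (so the infimum over the larger set is smaller).

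The main obstacle is the lower bound in the equality for $I^{3,k}_t$, namely showing $\sup_{s\in[\sigma_{k-1},\tau_k)}(-\Phi_s) \geq \inf_{r\in[\tau_k,t]}(-\Psi_r)$. Inequality~\eqref{eq2.11} only provides $\inf_{s\in[\sigma_{k-1},\tau_k]}\Phi_s \leq \Psi_{\tau_k}$ on the \emph{closed} interval, while the sup I need excludes the endpoint $\tau_k$. To remove that endpoint I use the strict gap $\Phi_{\tau_k} > \Psi_{\tau_k}$ from Lemma~\ref{lem}: since $\Phi_{\tau_k} > \Psi_{\tau_k} \geq \inf_{s\in[\sigma_{k-1},\tau_k]}\Phi_s$, the infimum on the closed interval cannot be attained only at $\tau_k$, so $\inf_{s\in[\sigma_{k-1},\tau_k)}\Phi_s \leq \Psi_{\tau_k}$ already. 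Combined with $-\Psi_{\tau_k} \geq \inf_{r\in[\tau_k,t]}(-\Psi_r)$, this closes the loop and gives $I^{3,k}_t = \inf_{r\in[\tau_k,t]}(-\Psi_r)$, hence $-K_t = \inf_{r\in[\tau_k,t]}(-\Psi_r)$.

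In the sub-case $k=0$ with $\tau^*<\sigma^*$, the piece $I^{2,0}_t$ is absent (or trivially bounded), and the role of $I^{3,k}_t$ as the realizer of the value is instead taken by $I^1_t$: since $\tau_0<\sigma_0=\sigma^*$ forces $\Phi_0>0$, one has $(-\Phi_0)^+=0$; and since $\Psi_r<0$ on $[0,\tau_0)$ while $\Psi_{\tau_0}\geq 0$ by~\eqref{eq2.9''}, one gets $\inf_{r\in[0,t]}(-\Psi_r) = \inf_{r\in[\tau_0,t]}(-\Psi_r) \leq 0$, so $I^1_t = 0\wedge \inf_{r\in[\tau_0,t]}(-\Psi_r) = \inf_{r\in[\tau_0,t]}(-\Psi_r)$. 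The dominations of $I^{3,0}_t$ and $I^{4,0}_t$ go through exactly as before (using positivity of $\Phi$ on $[0,\tau_0]$ in place of~\eqref{eq2.14}).
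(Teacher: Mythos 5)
Your proof is correct, and it follows the same overall strategy as the paper's: decompose the supremum in \eqref{K} at the switching times, bound every piece by $\inf_{r\in[\tau_k,t]}(-\Psi_r)$ using \eqref{eq2.12} and \eqref{eq2.14}, and show that the piece coming from $[\sigma_{k-1},\tau_k)$ actually realizes this value via \eqref{eq2.11}. The one substantive difference is in how the lower bound is obtained. The paper works with the coarser piece $I^{2,k}_t=\sup_{s\in[0,\tau_k]}[(-\Phi_s)\wedge\inf_{r\in[s,t]}(-\Psi_r)]$ and proves $I^{2,k}_t\geq\inf_{r\in[\tau_k,t]}(-\Psi_r)$ by an $\varepsilon$-argument: it selects a near-minimizer $\rho\in[\sigma_{k-1},\tau_k]$ of $\Phi$, deduces $-\Phi_\rho\geq-\Psi_{\tau_k}-\varepsilon$ from \eqref{eq2.11}, and lets $\varepsilon\to 0$. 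You instead isolate the half-open interval $[\sigma_{k-1},\tau_k)$, reduce $I^{3,k}_t$ to the exact identity $\bigl[\sup_{s\in[\sigma_{k-1},\tau_k)}(-\Phi_s)\bigr]\wedge\inf_{r\in[\tau_k,t]}(-\Psi_r)$, and then transfer \eqref{eq2.11} from the closed to the half-open interval by invoking the strict gap $\Phi_{\tau_k}>\Psi_{\tau_k}$ from Lemma \ref{lem}, which eliminates the limiting argument entirely; this endpoint observation is valid since otherwise both terms of $\min\bigl(\inf_{s\in[\sigma_{k-1},\tau_k)}\Phi_s,\Phi_{\tau_k}\bigr)$ would exceed $\Psi_{\tau_k}$, contradicting \eqref{eq2.11}. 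Your treatment of the case $k=0$, $\tau^*<\sigma^*$ through $I^1_t$ and \eqref{eq2.9''} coincides with the paper's. Both arguments are sound; yours buys an $\varepsilon$-free and arguably cleaner lower bound at the cost of one extra split point in the decomposition.
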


\begin{proof}
The proof is similar with the one for Lemma 2.8 in \cite{S2}. For readers' convenience, we give a short proof here. Let $t\in[\tau_k,\sigma_k)$. Set 
\begin{align*}
I^{5,k}_t=\sup_{s\in[\tau_k,t]}\left[(-\Phi_s)\wedge \inf_{r\in[s,t]}(-\Psi_r)\right].
\end{align*}
Then, we have $-K_t=I^1_t\vee I^{2,k}_t\vee I^{5,k}_t$, where $I^1$, $I^{2,k}$ are defined in the proof of Proposition \ref{prop2.2}. It is easy to check that 
\begin{align*}
I^1_t&\leq \inf_{s\in[0,t]}(-\Psi_s)\leq \inf_{s\in[\tau_k,t]}(-\Psi_s),\\
I^{2,k}_t&\leq \sup_{s\in[0,\tau_{k}]} \inf_{r\in[s,t]}(-\Psi_r)\leq \inf_{s\in[\tau_k,t]}(-\Psi_s).
\end{align*}
By \eqref{eq2.12}, we have
\begin{align*}
I^{5,k}_t\leq \sup_{s\in[\tau_k,t]}\left[\inf_{r\in[\tau_k,s]}(-\Psi_r)\wedge \inf_{r\in[s,t]}(-\Psi_r)\right]\leq \inf_{s\in[\tau_k,t]}(-\Psi_s).
\end{align*}
All the above analysis indicates that $-K_t\leq \inf_{s\in[\tau_k,t]}(-\Psi_s)$. Now, we are in a position to prove the inverse inequality. It is sufficient to prove that for $k\geq 1$,
\begin{align}\label{eq2.36}
I^{2,k}_t\geq  \inf_{s\in[\tau_k,t]}(-\Psi_s), \textrm{ for } t\in[\tau_k,\sigma_k)
\end{align}
and  for $\tau^*<\sigma^*$, $k=0$
\begin{align}\label{eq2.40}
I^1_t=\inf_{r\in[\tau_0,t]}(-\Psi_r), \textrm{ for } t\in[\tau_0,\sigma_0).
\end{align}
We first prove \eqref{eq2.36}. For any fixed $\varepsilon>0$ and $k\geq 1$, there exists some $\rho\in[\sigma_{k-1},\tau_k]$, such that 
\begin{align*}
\inf_{s\in[\sigma_{k-1},\tau_k]}\Phi_s\geq \Phi_\rho-\varepsilon.
\end{align*}
Together with \eqref{eq2.11}, we have
\begin{align*}
-\Phi_\rho\geq -\Psi_{\tau_k}-\varepsilon.
\end{align*}
Recalling \eqref{eq2.14}, we obtain that 
\begin{align*}
I^{2,k}_t\geq &(-\Phi_\rho)\wedge \inf_{r\in[\rho,t]}(-\Psi_r)\\
\geq &(-\Phi_\rho)\wedge \inf_{r\in[\rho,\tau_k)}(-\Psi_r)\wedge \inf_{r\in[\tau_k,t]}(-\Psi_r)\\
= &(-\Phi_\rho)\wedge \inf_{r\in[\tau_k,t]}(-\Psi_r)\\
\geq &(-\Psi_{\tau_k}-\varepsilon)\wedge \inf_{r\in[\tau_k,t]}(-\Psi_r)\geq  \inf_{r\in[\tau_k,t]}(-\Psi_r)-\varepsilon.
\end{align*}
Since $\varepsilon$ can be arbitrarily small, \eqref{eq2.36} holds true for any $k\geq 1$. It remains to prove that \eqref{eq2.40} holds when $\tau^*<\sigma^*$ and $k=0$. Indeed, since $\sigma^*>0$, $\Phi_0>0$. Besides, as $\tau_0=\tau^*$, we have $\sup_{r\in[0,\tau_0)}\Psi_r\leq 0$ and $\sup_{r\in[\tau_0,t]}\Psi_r\geq \Psi_{\tau_0}\geq 0$. Then, we may check that 
\begin{align*}
I^1_t=0\wedge \inf_{r\in[0,\tau_0)}(-\Psi_r)\wedge \inf_{r\in[\tau_0,t]}(-\Psi_r)=\inf_{r\in[\tau_0,t]}(-\Psi_r).
\end{align*}
The proof is complete.
\end{proof}

Combining Remark \ref{rem2.2}, Proposition \ref{prop2.2} and Proposition \ref{prop2.3}, we have the following representation for $K$, which is a generalization of Theorem 2.2 in \cite{S1} and Theorem 2.6 in \cite{S2}.
\begin{theorem}\label{thm2.2}
Under Assumption \ref{ass1}, Let $K$ be defined by \eqref{K}. If $\tau^*>\sigma^*$, then
\begin{equation}\label{eq2.21}
-K_t=\begin{cases}
0, &t\in[0,\sigma_0),\\
\sup_{s\in[\sigma_{k-1},t]}(-\Phi_s), &t\in[\sigma_{k-1},\tau_k), \ k\geq 1,\\
\inf_{s\in[\tau_{k},t]}(-\Psi_s), &t\in[\tau_k,\sigma_k), \ k\geq 1.
\end{cases}
\end{equation}
If $\tau^*<\sigma^*$, then
\begin{equation}\label{eq2.22}
-K_t=\begin{cases}
0, &t\in[0,\tau_0),\\
\inf_{s\in[\tau_{k},t]}(-\Psi_s), &t\in[\tau_k,\sigma_k), \ k\geq 0,\\
\sup_{s\in[\sigma_{k-1},t]}(-\Phi_s), &t\in[\sigma_{k-1},\tau_k), \ k\geq 1.
\end{cases}
\end{equation}
\end{theorem}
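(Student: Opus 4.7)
The proof is a straightforward assembly of the three ingredients already established: Remark \ref{rem2.2}(ii) handles the initial block on which $K\equiv 0$, Proposition \ref{prop2.2} handles the intervals $[\sigma_{k-1},\tau_k)$ on which $-K$ tracks the running supremum of $-\Phi$, and Proposition \ref{prop2.3} handles the intervals $[\tau_k,\sigma_k)$ on which $-K$ tracks the running infimum of $-\Psi$. My plan is to partition $[0,\infty)$ using the sequences $\{\sigma_k\}$ and $\{\tau_k\}$ introduced in \eqref{eq2.8}--\eqref{eq2.9'}, invoke Proposition \ref{prop2.1} to ensure this is a genuine partition of the half-line (no finite accumulation), and then read off the claimed formulas block by block.

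In the case $\tau^{*}>\sigma^{*}$, the construction gives $\tau_0=0$ and $\sigma_0=\sigma^{*}=\sigma^{*}\wedge\tau^{*}$, so the initial block in \eqref{eq2.21} is exactly $[0,\sigma^{*}\wedge\tau^{*})$; on this block Remark \ref{rem2.2}(ii) yields $K_t=0$. The remaining alternating blocks $[\sigma_{k-1},\tau_k)$ and $[\tau_k,\sigma_k)$ for $k\geq 1$ then produce the second and third lines of \eqref{eq2.21} by direct application of Proposition \ref{prop2.2} and Proposition \ref{prop2.3}, respectively.

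In the case $\tau^{*}<\sigma^{*}$, the construction gives $\tau_0=\tau^{*}=\sigma^{*}\wedge\tau^{*}$, so again the initial block $[0,\tau_0)$ coincides with $[0,\sigma^{*}\wedge\tau^{*})$ and Remark \ref{rem2.2}(ii) gives the first line of \eqref{eq2.22}. The block $[\tau_0,\sigma_0)$ is treated by Proposition \ref{prop2.3} (whose statement explicitly covers the $k=0$ case under $\tau^{*}<\sigma^{*}$), while all subsequent blocks $[\sigma_{k-1},\tau_k)$ and $[\tau_k,\sigma_k)$ for $k\geq 1$ fall under Propositions \ref{prop2.2} and \ref{prop2.3} in the same way as before.

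There is no substantive obstacle here; the only thing requiring care is the index bookkeeping, namely matching the endpoint $\sigma_0=\sigma^{*}$ (resp.\ $\tau_0=\tau^{*}$) with the boundary of the trivial initial block, and recognizing that Proposition \ref{prop2.3} was stated with a separate treatment of $k=0$ precisely for the subcase $\tau^{*}<\sigma^{*}$. Once the intervals are verified to exhaust $[0,\infty)$ via Proposition \ref{prop2.1}, the representations \eqref{eq2.21} and \eqref{eq2.22} follow.
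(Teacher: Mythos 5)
Your proposal is correct and matches the paper exactly: the paper gives no separate argument for this theorem, stating only that it follows by combining Remark \ref{rem2.2}, Proposition \ref{prop2.2} and Proposition \ref{prop2.3}, which is precisely the block-by-block assembly you describe (with Proposition \ref{prop2.1} guaranteeing the blocks exhaust $[0,\infty)$). The index bookkeeping you spell out is the only content of the proof, and you have it right.
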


\begin{remark}\label{rem}
It is easy to check that, for any $k\geq 0$, $K_{\sigma_{k}}=\Phi_{\sigma_k}$ and if $k\geq 1$ or $\tau^*<\sigma^*$ and $k=0$, $K_{\tau_k}=\Psi_{\tau_k}$.
\end{remark}

Now, we show that $K$ can be represented as the difference between two nondecreasing functions which only increase when $R(\cdot,X_\cdot)$, $L(\cdot,X_\cdot)$ hit $0$. 
\begin{theorem}\label{thm2.3}
Suppose that Assumption \ref{ass1} holds. Let $X=S+K$. Then, we have
\begin{itemize}
\item[(1)] $K\in BV[0,\infty)$,
\item[(2)] $L(t,X_t)\leq 0\leq R(t,X_t)$ for any $t\geq 0$,
\item[(3)] $|K|_t=\int_0^t I_{\{L(s,X_s)= 0\textrm{ or } R(s,X_s)=0\}}d|K|_s$,
\item[(4)] $K_t=\int_0^t I_{\{R(s,X_s)=0\}}d|K|_s-\int_0^t I_{\{L(s,X_s)= 0\}}d|K|_s$.
\end{itemize}
\end{theorem}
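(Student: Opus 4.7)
My plan is to read off all four assertions from the piecewise representation in Theorem~\ref{thm2.2}, which presents $K$ as an alternating sequence of running-infimum and running-supremum pieces on the intervals delimited by $\{\sigma_k,\tau_k\}$.

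First, for (1), I would observe that on each interval $[\sigma_{k-1},\tau_k)$ the formula $-K_t=\sup_{s\in[\sigma_{k-1},t]}(-\Phi_s)$ makes $K$ nonincreasing, while on each $[\tau_k,\sigma_k)$ the formula $-K_t=\inf_{s\in[\tau_k,t]}(-\Psi_s)$ makes $K$ nondecreasing. Since by Proposition~\ref{prop2.1} only finitely many of the $\sigma_k,\tau_k$ lie in any bounded interval, the total variation of $K$ on $[0,T]$ is bounded by a finite sum of oscillations of $\Phi$ and $\Psi$ plus the jumps at the $\sigma_k,\tau_k$ (which, by Remark~\ref{rem}, are governed by $\Phi_{\sigma_k}$ and $\Psi_{\tau_k}$ and hence finite). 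Therefore $K\in BV[0,\infty)$.

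For (2), I would argue piecewise using the monotonicity and strict monotonicity of $L(t,\cdot)$ and $R(t,\cdot)$. On $[\sigma_{k-1},\tau_k)$ the representation gives $K_t=\inf_{s\in[\sigma_{k-1},t]}\Phi_s\le \Phi_t$, so $X_t\le S_t+\Phi_t$ and hence $L(t,X_t)\le L(t,S_t+\Phi_t)=0$; combining with \eqref{eq2.10}, $K_t>\Psi_t$, giving $R(t,X_t)>R(t,S_t+\Psi_t)=0$. Symmetrically on $[\tau_k,\sigma_k)$ one gets $R(t,X_t)\ge 0$ and $L(t,X_t)<0$ from \eqref{eq2.12}. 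The intervals $[0,\sigma_0)$ (when $\tau^*>\sigma^*$) and $[0,\tau_0)$ (when $\tau^*<\sigma^*$), where $K_t=0$, are handled by the definitions of $\sigma^*,\tau^*$, and the boundary points $t=\sigma_k,\tau_k$ are handled via Remark~\ref{rem}, which gives $X_{\sigma_k}=S_{\sigma_k}+\Phi_{\sigma_k}$ and $X_{\tau_k}=S_{\tau_k}+\Psi_{\tau_k}$, i.e.\ equality in the relevant boundary.

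For (3) and (4), the idea is that the running supremum $g(t)=\sup_{s\in[a,t]}f(s)$ of a c\`adl\`ag function increases only when $g(t)=f(t)$ (on the continuous part and at jumps). Applied on $[\sigma_{k-1},\tau_k)$ to $-\Phi$, the measure $d(-K)$ is supported on $\{t:K_t=\Phi_t\}=\{t:L(t,X_t)=0\}$; and on $[\tau_k,\sigma_k)$ applied to $-\Psi$, the measure $dK$ is supported on $\{t:K_t=\Psi_t\}=\{t:R(t,X_t)=0\}$. Together with Remark~\ref{rem} at the endpoints, this gives $K^l$ and $K^r$ supported on $\{L(\cdot,X)=0\}$ and $\{R(\cdot,X)=0\}$ respectively, whence (4). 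Adding the two positive measures gives (3).

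The main obstacle will be the careful bookkeeping at the switching times $\sigma_k,\tau_k$: at such a time $K$ can have a jump whose sign must be verified to match the pattern of the adjacent interval, and one must check the membership $t\in\{L(t,X_t)=0\}$ or $\{R(t,X_t)=0\}$ exactly at the endpoint. Remark~\ref{rem} together with inequalities \eqref{eq2.11} and \eqref{eq2.13} provide precisely the identifications needed ($K_{\sigma_k}=\Phi_{\sigma_k}$ and $K_{\tau_k}=\Psi_{\tau_k}$), and the remaining computation amounts to comparing left limits $K_{\sigma_k^-},K_{\tau_k^-}$ with the running supremum/infimum formulas from Theorem~\ref{thm2.2} to determine whether the jump contributes to $K^r$ or to $K^l$ in accordance with Remark~\ref{remark}.
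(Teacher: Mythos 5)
Your proposal is correct, and for parts (1) and (2) it is essentially the paper's argument (the paper reads (1) straight off Theorem~\ref{thm2.2}, and proves (2) by showing $\Psi_t\leq K_t\leq\Phi_t$ globally from the min/max form of \eqref{K} rather than interval by interval, but this is a cosmetic difference). For parts (3) and (4), however, you take a genuinely different route. The paper does not use the support property of running suprema at all: it introduces the set $A=\{t\geq\sigma_0: L(t,X_t)<0<R(t,X_t)\}$, decomposes $A$ into countably many intervals $(\alpha_t,\beta_t)$ plus certain left endpoints, shows by a stopping-time argument that $K$ is constant on every compact subinterval of each $(\alpha_t,\beta_t)$, and then shows $K$ is continuous at the endpoints $\alpha_t$ that lie in $A$; statement (4) is then deduced from (3) via the localization facts \eqref{eq3.33}--\eqref{eq3.34}, which say that $R(\cdot,X)$ can vanish only on the intervals $[\tau_k,\sigma_k)$ and $L(\cdot,X)$ only on $[\sigma_{k-1},\tau_k)$. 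Your approach instead applies, on each monotonicity interval of Theorem~\ref{thm2.2}, the fact that the measure $dg$ of a running supremum $g(t)=\sup_{s\in[a,t]}f(s)$ of a c\`adl\`ag $f$ is carried by $\{g=f\}$, and then classifies the jumps at the switching times. This is sound: the c\`adl\`ag version of the support fact does hold (if $g(t_0)>f(t_0)$ then $g(t_0)=g(t_0-)$ and $g$ is flat on a right neighbourhood, and a countable covering finishes it), and the jump bookkeeping closes exactly as you anticipate --- \eqref{eq2.11} together with $\inf_t(\Phi_t-\Psi_t)>0$ forces $K_{\tau_k}-K_{\tau_k-}\geq 0$ with $R(\tau_k,X_{\tau_k})=0$, and \eqref{eq2.13} forces $K_{\sigma_k}-K_{\sigma_k-}\leq 0$ with $L(\sigma_k,X_{\sigma_k})=0$, matching Remark~\ref{remark}. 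What your route buys is brevity and transparency once the running-supremum lemma is granted; what the paper's route buys is self-containedness, since its interval-decomposition argument on $A$ is in effect a from-scratch proof of that same support property adapted to this setting, and it never needs to determine the sign of the jumps at $\sigma_k,\tau_k$. If you write your version up, the two points that must be made explicit are the c\`adl\`ag running-supremum lemma and the jump-sign analysis at the switching times; neither is difficult, but both are where the content lives.
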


\begin{proof}
(1) is a direct consequence of Theorem \ref{thm2.2}.  \eqref{K} can be written as
\begin{align*}
K_t=\min\left([-(\Phi_0)^-]\vee \sup_{r\in[0,t]}\Psi_r,\inf_{s\in[0,t]}\left[\Phi_s\vee \sup_{r\in[s,t]}\Psi_r\right] \right).
\end{align*}
Recalling that $\Phi\geq \Psi$, it follows that 
\begin{align*}
\inf_{s\in[0,t]}\left[\Phi_s\vee \sup_{r\in[s,t]}\Psi_r\right]\leq \Phi_t\vee\Psi_t=\Phi_t,
\end{align*}
which implies that $K_t\leq \Phi_t$. On the other hand, it is easy to check that 
\begin{align*}
&[-(\Phi_0)^-]\vee \sup_{r\in[0,t]}\Psi_r\geq \Psi_t,\\
&\Phi_s\vee \sup_{r\in[s,t]}\Psi_r\geq \Psi_t, \textrm{ for any } s\in[0,t].
\end{align*}
Consequently, we have $K_t\geq \Psi_t$. Therefore, by the definition for $\Phi,\Psi$, we have
\begin{align*}
&L(t,X_t)=L(t,S_t+K_t)\leq L(t,S_t+\Phi_t)=0,\\
&R(t,X_t)=R(t,S_t+K_t)\geq R(t,S_t+\Psi_t)=0.
\end{align*}

Motivated by the proof of Theorem 3.4 in \cite{KLR}, we only prove (3), (4) for the case $\tau^*>\sigma^*$ since the case $\tau^*<\sigma^*$ can be proved similarly. Since $K_t=0$ when $t\in[0,\sigma_0)$, we only focus on the case that $t\geq \sigma_0$. We claim that for $t\geq \sigma_0$,
\begin{equation}\label{eq3.33}
R(t,X_t)=0\textrm{ implies that } t\in[\tau_k,\sigma_k) \textrm{ for some } k\geq 1,
\end{equation}
and 
\begin{equation}\label{eq3.34}
L(t,X_t)=0\textrm{ implies that } t\in[\sigma_{k-1},\tau_k) \textrm{ for some } k\geq 1.
\end{equation}
 We first prove \eqref{eq3.33}. Suppose that $t\in[\sigma_{k-1},\tau_k)$ for some $k\geq 1$. Recalling \eqref{eq2.10} and \eqref{eq2.21}, we have
\begin{align*}
-\Psi_t>\sup_{s\in[\sigma_{k-1},t]}(-\Phi_s)=-K_t.
\end{align*} 
It follows that 
\begin{align*}
R(t,X_t)>R(t,S_t+\Psi_t)=0,
\end{align*}
which implies that \eqref{eq3.33} holds. Now, suppose that $t\in[\tau_k,\sigma_k)$ for some $k\geq 1$. By \eqref{eq2.12} and \eqref{eq2.21}, we obtain that 
\begin{align*}
-\Phi_t<\inf_{s\in[\tau_k,t]}(-\Psi_s)=-K_t.
\end{align*}
Consequently, we have
\begin{align*}
L(t,X_t)<L(t,S_t+\Phi_t)=0.
\end{align*}
Therefore, \eqref{eq3.34} holds. Then, if assertion (3) is true, by \eqref{eq2.21}, \eqref{eq3.33} and \eqref{eq3.34}, we derive that (4) is true.

Now, it remains to prove (3). Set 
\begin{align*}
A:=\{t\geq \sigma_0:L(t,X_t)<0<R(t,X_t)\}.
\end{align*}
It suffices to prove that $\int_Ad|K|_t=0$. For $t\in A$, we define
\begin{align*}
&l_t:=L(t,X_t), \ r_t:=R(t,X_t),\\
&\alpha_t:=\inf\{s\in[\sigma_0,t]:(s,t]\subset A\},\\
&\beta_t:=\sup\{s\in[t,\infty):[t,s)\subset A\}.
\end{align*}
By the right-continuity of $l$ and $r$, we have $\beta_t\notin A$ while $\alpha_t$ might or might not belong to $A$. Besides, we have $\alpha_t\leq t<\beta_t$, which implies that $(\alpha_t,\beta_t)$ is nonempty. The above analysis implies that $A$ has the following representation
\begin{align*}
A=\left(\cup_{t\in I}(\alpha_t,\beta_t)\right)\cup\{\alpha_t:t\in J\},
\end{align*}
where $I$ is a countable subset of $[0,\infty)$ and $J\subset I$.

We first show that $\int_{(\alpha_t,\beta_t)}d|K|_s=0$ for any $t\in I$. Note that for any $s\in(\alpha_t,\beta_t)$, we have $l_s<0<r_s$. Recalling the definition of $\Phi$, $\Psi$ and Remark \ref{rem}, for any $k\geq 1$, we have
\begin{align}\label{eq3.31}
r_{\tau_k}=R(\tau_k,S_{\tau_k}+\Psi_{\tau_k})=0, \ l_{\sigma_{k-1}}=L(\sigma_{k-1},S_{\sigma_{k-1}}+\Phi_{\sigma_{k-1}})=0.
\end{align}
Therefore, there are only two possibilities: either $(\alpha_t,\beta_t)\subset (\tau_k,\sigma_k)$ or $(\alpha_t,\beta_t)\subset(\sigma_{k-1},\tau_k)$ for some $k\geq 1$. We only consider the second case as the first case is analogous. It is enough to show that $K$ is a constant on $[a_t,b_t]$ for any $[a_t,b_t]\subset(\alpha_t,\beta_t)$. Recalling that when $t\in[\sigma_{k-1},\tau_k)$, $K_t=\inf_{s\in[\sigma_{k-1},t]}\Phi_s$. Set 
\begin{align*}
\rho=\inf\{s\in[a_t,b_t]: K_s<K_{a_t}\}.
\end{align*}
Suppose that $\rho<\infty$. The right-continuity of $K$ yields that $K_s=K_{a_t}$ for any $s\in[a_t,\rho)$ and either $K_\rho=\Phi_\rho<K_{a_t}$ or $K_\rho=\Phi_\rho=K_{a_t}$. In either case, 
\begin{align*}
l_\rho=L(\rho,X_\rho)=L(\rho,S_\rho+\Phi_\rho)=0,
\end{align*}
which contradicts the fact that $\rho\in A$. Hence, $\rho=\infty$ and $K$ is a constant on $[a_t,b_t]$.

To complete the proof, it suffices to show that for any $\alpha_t\in A$ with $t\in J$, $K$ is continuous at $a_t$. Recalling \eqref{eq3.31}, there exists some $k\geq 1$, such that either $\alpha_t\subset (\tau_k,\sigma_k)$ or $\alpha_t\subset(\sigma_{k-1},\tau_k)$. By the definition of $\alpha_t$, we may find a sequence $\{\gamma_n\}_{n=1}^\infty \subset (0,\alpha_t)\cap A^c$ such that $\gamma_n\uparrow \alpha_t$.

We first consider the case that $l_{\gamma_n}=0$ or equivalently,  $K_{\gamma_n}=\Phi_{\gamma_n}$ for infinitely many values of $n$. Applying \eqref{eq3.34}, we have $\gamma_n\in [\sigma_{k-1},\tau_k)$ for some $k\geq 1$. There exists some $k^*$ independent of $n$, such that for $n$ large enough, $\gamma_n\in[\sigma_{k^*-1},\tau_{k^*})$ and $\alpha_t\in (\sigma_{k^*-1},\tau_{k^*})$. 
Therefore, we have
\begin{align*}
\Phi_{\gamma_n}=K_{\gamma_n}=\inf_{s\in[\sigma_{k^*-1},\gamma_n]}\Phi_s.
\end{align*}
Letting $n\rightarrow \infty$ implies that
\begin{align*}
\Phi_{\alpha_t-}=K_{\alpha_t-}=\inf_{s\in[\sigma_{k^*-1},\alpha_t)}\Phi_s.
\end{align*}
Since $\alpha_t\in [\sigma_{k^*-1},\tau_{k^*})$, we have $K_{\alpha_t}=\inf_{s\in[\sigma_{k^*-1},\alpha_t]}\Phi_s$, which yields that $K_{\alpha_t}\leq K_{\alpha_t-}$. Suppose that $K_{\alpha_t}<K_{\alpha_t-}$, which implies that $K_{\alpha_t}=\Phi_{\alpha_t}$. This leads to the following equality
\begin{align*}
l_{\alpha_t}=L(\alpha_t,X_{\alpha_t})=L(\alpha_t,S_{\alpha_t}+\Phi_{\alpha_t})=0,
\end{align*} 
which contradicts the fact that $\alpha_t\in A$. Therefore, we have $K_{\alpha_t}= K_{\alpha_t-}$, that is, $K$ is continuous at $\alpha_t$.

For the case that $l_{\gamma_n}=0$ does not hold for infinitely many values of $n$, then $r_{\gamma_n}=0$ must hold for infinitely many values of $n$. By a similar analysis as above, we can also show that $K$ is continuous at $\alpha_t$. The proof is complete.
\end{proof}

\begin{proof}[Proof of Theorem \ref{main}]
The uniqueness of solution to nonlinear Skorokhod problem is a direct consequence of Proposition \ref{uniqueness}. Let $K$ be defined as \eqref{K} and set $X_t=S_t+K_t$,
\begin{align*}
K^r_t=\int_0^t I_{\{R(s,X_s)=0\}}d|K|_s, \ K^l_t=\int_0^t I_{\{L(s,X_s)=0\}}d|K|_s.
\end{align*}
Clearly, $K^r$, $K^l$ are nondecreasing functions. By Theorem \ref{thm2.3}, we have $L(t,X_t)\leq 0\leq R(t,X_t)$, $K_t=K^r_t-K^l_t$ for any $t\geq 0$, and 
\begin{align*}
\int_0^\infty I_{\{L(s,X_s)<0\}}dK^l_s=0, \  \int_0^\infty I_{\{R(s,X_s)>0\}}dK^r_s=0.
\end{align*}
That is, $(X,K)$ is the solution to the Skorokhod problem $\mathbb{SP}_L^R(S)$.
\end{proof}

\begin{remark}\label{single}
Suppose that $L\equiv-\infty$. Then, the Skorokhod problem with two nonlinear reflecting boundaries turns into the Skorokhod problem with one constraint. More precisely, given $S\in D[0,\infty)$, we need to find pair of functions $(X,K)\in D[0,\infty)\times I[0,\infty)$ such that
\begin{itemize}
\item[(i)] $X_t=S_t+K_t$;
\item[(ii)] $R(t,X_t)\geq 0$;
\item[(iii)] $K_{0-}=0$ and $K$ is a nondecreasing function satisfying
\begin{align*}
 \int_0^\infty I_{\{R(s,X_s)>0\}}dK_s=0.
\end{align*}
\end{itemize}
For simplicity, such pair of functions is written as $(X,K)=\mathbb{SP}^R(S)$.

Since $L\equiv -\infty$, $\Phi$ maybe interpreted as $+\infty$. Recalling \eqref{K}, we have 
$$
K_t=\sup_{s\in[0,t]}\Psi_s^+.
$$
Especially, if $R(t,x)=x$, the Skorokhod problem with nonlinear constraint degenerates to the classical Skorokhod problem. In this case, we have $\Psi_t=-S_t$. Consequently, $K_t=\sup_{s\in[0,T]}S_t^-$, which coincides with the result in \cite{Skorokhod1}.
\end{remark}

\begin{remark}\label{rem4.9}
Suppose that $(X,K)$ solves the Skorokhod problem $\mathbb{SP}_L^R(S)$ and $K$ admits the decomposition $K=K^r-K^l$. Then, $(X,K^r)$ may be interpreted as the solution to the  Skorokhod problem with nonlinear constraint $\mathbb{SP}^R(S-K^l)$. For any $t\geq 0$, let $\Psi^r_t$ be the solution to the following equation
$$
R(t,S_t-K^l_t+\Psi^r_t)=0.
$$
Then, we have $K^r_t=\sup_{s\in[0,t]}(\Psi^r_s)^+$. On the other hand, $(-X,K^l)$ may be regarded as the solution to the Skorokhod problem with nonlinear constraint $\mathbb{SP}^{\tilde{L}}(-S-K^r)$, where $\tilde{L}(t,x):=-L(t,-x)$. For any $t\geq 0$, let $\Phi^l_t$ be the solution to the following equation
$$
\tilde{L}(t,-S_t-K^{r}_t+\Phi^l_t)=-L(t,S_t+K^l_t-\Phi^l_t)=0.
$$
Then, we have $K^l_t=\sup_{s\in[0,t]}(\Phi^l_s)^+$.
\end{remark}

\begin{remark}
	It is worth pointing out that (iv) in Assumption \ref{ass1} is necessary for the construction of $K$ to be of bounded variation. The readers may refer to Example 2.1 in \cite{S1} for a counterexample.
\end{remark}

\section{Property of solutions to Skorokhod problems with two nonlinear reflecting boundaries}

\subsection{Non-anticipatory properties}
In this subsection, suppose $(X,K)$ is the solution to a Skorokhod problem with two nonlinear reflecting boundaries. We investigate if the  pair of shift functions is still the solution to some other Skorokhod problem. For this purpose, for any fixed $d\geq 0$, we define two operators $T_d,H_d:D[0,\infty)\rightarrow D[0,\infty)$ as follows:
\begin{align}\label{tdhd}
(T_d(\psi))_t:=\psi_{d+t}-\psi_d,\ (H_d(\psi))_t:=\psi_{d+t}, \ t\geq 0.
\end{align}
Moreover, we define two functions $L^d,R^d:[0,\infty)\times\mathbb{R}\rightarrow \mathbb{R}$ as follows:
\begin{align*}
L^d(t,x):=L(t+d,x),\ R^d(t,x):=R(t+d,x).
\end{align*} 

\begin{theorem}\label{thm3.1}
Under Assumption \ref{ass1}, for a given $S\in D[0,\infty)$, if $(X,K)$ solves the Skorokhod problem $\mathbb{SP}_{L}^R(S)$, then $(H_d(X),T_d(K))$ solves the Skorokhod problem $\mathbb{SP}_{L^d}^{R^d}(T_d(S)+X_d)$.
\end{theorem}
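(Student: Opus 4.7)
The plan is to verify the three conditions of Definition \ref{def} directly for the candidate pair $(\tilde X, \tilde K) := (H_d(X), T_d(K))$, viewed as a solution of the Skorokhod problem with input $T_d(S) + X_d$ and constraints $L^d, R^d$. The natural decomposition of $\tilde K$ to work with is $\tilde K^r_t := K^r_{d+t} - K^r_d$ and $\tilde K^l_t := K^l_{d+t} - K^l_d$, both of which are nondecreasing on $[0,\infty)$ with $\tilde K^r_0 = \tilde K^l_0 = 0$, so by convention $\tilde K_{0-} = 0$ and $\tilde K = \tilde K^r - \tilde K^l$.

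For condition (i), a one-line computation gives
\begin{align*}
\tilde X_t = X_{d+t} = S_{d+t} + K_{d+t} = (T_d(S))_t + (S_d + K_d) + (K_{d+t} - K_d) = (T_d(S))_t + X_d + \tilde K_t.
\end{align*}
Condition (ii) is merely a rewriting: since $(X, K)$ solves $\mathbb{SP}_L^R(S)$, we have $L^d(t, \tilde X_t) = L(t+d, X_{t+d}) \leq 0$ and $R^d(t, \tilde X_t) = R(t+d, X_{t+d}) \geq 0$ for every $t \geq 0$.

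The two Skorokhod integral identities in (iii) will follow by the change of variable $u = s + d$. By construction $\tilde K^l$ has no jump at $0$, so the pushforward of $d\tilde K^l$ under $s \mapsto s + d$ coincides with the restriction of $dK^l$ to $(d, \infty)$, and hence
\begin{align*}
\int_0^\infty I_{\{L^d(s, \tilde X_s) < 0\}}\, d\tilde K^l_s = \int_{(d, \infty)} I_{\{L(u, X_u) < 0\}}\, dK^l_u \leq \int_0^\infty I_{\{L(u, X_u) < 0\}}\, dK^l_u = 0,
\end{align*}
and the analogous calculation handles $\tilde K^r$. The one mild subtlety is a possible jump of $K$ at time $d$: subtracting $K^r_d$ and $K^l_d$ rather than their left limits absorbs any such jump into the new initial state $\tilde X_0 = X_d$ (which already satisfies $L(d, X_d) \leq 0 \leq R(d, X_d)$), so $\tilde K$ carries no jump at $0$ and the boundary constraint at time $0$ of the shifted problem is inherited for free. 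Once this is handled, no further work is needed.
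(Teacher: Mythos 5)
Your proposal is correct and follows essentially the same route as the paper: shift all three objects by $d$, verify condition (i) by the same algebraic identity, observe (ii) is a relabelling, and obtain (iii) by the change of variable $u=s+d$ using the decomposition $T_d(K)=T_d(K^r)-T_d(K^l)$. Your extra remark about the jump of $K$ at time $d$ being absorbed into the new initial state $X_d$ is a correct (and slightly more careful) elaboration of what the paper leaves implicit.
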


\begin{proof}
Clearly, if $L,R$ satisfy Assumption \ref{ass1}, so do $L^d,R^d$. For any $t\geq 0$, it is easy to check that 
\begin{align*}
(H_d(X))_t=X_{d+t}=S_{d+t}+K_{d+t}=(T_d(S))_t+(T_d(K))_t+(S_d+K_d)=(T_d(S))_t+X_d+(T_d(K))_t.
\end{align*}
Moreover, we have
\begin{align*}
L^d(t,(H_d(X))_t)=L(t+d,X_{t+d})\leq 0\leq R(t+d,X_{t+d})=R^d(t,(H_d(X))_t)
\end{align*}
and 
\begin{align*}
&\int_0^\infty I_{\{L^d(s,(H_d(X))_s)<0\}}d(T_d(K^l))_s=\int_d^\infty I_{\{L(s,X_s)<0\}}dK^l_s=0,\\
&\int_0^\infty I_{\{R^d(s,(H_d(X))_s)>0\}}d(T_d(K^r))_s=\int_d^\infty I_{\{R(s,X_s)>0\}}dK^r_s=0.
\end{align*}
The proof is complete.
\end{proof}

\begin{remark}
Theorem \ref{thm3.1} is the extension of Theorem 3.1 in \cite{S1} to the nonlinear reflecting case.
\end{remark}

\subsection{Comparison properties}

In this subsection, we present some comparison properties of the Skorokhod problems with two nonlinear reflecting boundaries. In the proofs, the following inequalities are frequently used:
\begin{align*}
(a+b)^\pm\leq a^\pm+b^\pm, \ (a-b)^\pm\geq a^\pm-b^\pm, \ \textrm{ for any } a,b\in\mathbb{R}.
\end{align*}
Before we investigate the comparison property for double reflected problem, we first establish the comparison property for the single reflected case, which may be of independent interest.
\begin{proposition}\label{lem4.1}
Let Assumption \ref{ass1} (i)-(iii) hold for $R$. Given $c^i_0\in\mathbb{R}$ and $S^i\in D[0,\infty)$ with $S^1_0=S^2_0=0$, $i=1,2$, suppose that there exists a nonnegative $\nu\in I[0,\infty)$ such that $S^2\leq S^1\leq S^2+\nu$. Let $(X^i,K^i)=\mathbb{SP}^R(c_0^i+S^i)$, $i=1,2$. Then, we have
\begin{itemize}
\item[1.] $K_t^1-(c^2_0-c^1_0)^+\leq K^2_t\leq K^1_t+\nu_t+(c_0^1-c_0^2)^+$;
\item[2.] $X_t^2-\nu_t-(c^2_0-c^1_0)^+\leq X^1_t\leq X^2_t+\nu_t+(c_0^1-c_0^2)^+$.
\end{itemize}
\end{proposition}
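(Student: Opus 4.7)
The plan is to exploit the explicit formula for single-reflected problems given in Remark \ref{single}, namely $K_t^i=\sup_{s\in[0,t]}(\Psi_s^i)^+$ where $\Psi_t^i$ solves $R(t,c_0^i+S_t^i+\Psi_t^i)=0$. The key observation is that because $R(t,\cdot)$ is strictly increasing by Assumption \ref{ass1}(ii), the two identities
\[R(t,c_0^1+S_t^1+\Psi_t^1)=0=R(t,c_0^2+S_t^2+\Psi_t^2)\]
force $c_0^1+S_t^1+\Psi_t^1=c_0^2+S_t^2+\Psi_t^2$, which yields the algebraic identity $\Psi_t^1-\Psi_t^2=(c_0^2-c_0^1)+(S_t^2-S_t^1)$. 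This converts the problem entirely into manipulations of scalar quantities, with no need for the Lipschitz constants $c,C$ at all.

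First, I would plug in the hypothesis $0\leq S_t^1-S_t^2\leq \nu_t$ to obtain the two-sided bound
\[(c_0^1-c_0^2)\leq \Psi_t^2-\Psi_t^1\leq (c_0^1-c_0^2)+\nu_t.\]
Next, I would apply the elementary inequality $(a+b)^+\leq a^++b^+$ recalled in the preamble to the subsection to deduce, for each $s\in[0,t]$,
\[(\Psi_s^2)^+\leq (\Psi_s^1)^++(\Psi_s^2-\Psi_s^1)^+\leq (\Psi_s^1)^++(c_0^1-c_0^2)^++\nu_s,\]
using $\nu_s\geq 0$ in the final step. Taking the supremum in $s\in[0,t]$ and exploiting monotonicity of $\nu$ gives $K_t^2\leq K_t^1+\nu_t+(c_0^1-c_0^2)^+$. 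The reverse bound $(\Psi_s^1)^+\leq (\Psi_s^2)^++(c_0^2-c_0^1)^+$ follows analogously from $\Psi_s^1-\Psi_s^2\leq c_0^2-c_0^1$, and supremizing yields $K_t^1-(c_0^2-c_0^1)^+\leq K_t^2$, completing assertion 1.

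For assertion 2, I would substitute $X_t^i=c_0^i+S_t^i+K_t^i$ and compute
\[X_t^1-X_t^2=(c_0^1-c_0^2)+(S_t^1-S_t^2)+(K_t^1-K_t^2).\]
Combining the upper bound $K_t^1-K_t^2\leq (c_0^2-c_0^1)^+$ with $S_t^1-S_t^2\leq \nu_t$ and the algebraic identity $(c_0^1-c_0^2)+(c_0^2-c_0^1)^+=(c_0^1-c_0^2)^+$ (checked by splitting on the sign of $c_0^1-c_0^2$) gives $X_t^1\leq X_t^2+\nu_t+(c_0^1-c_0^2)^+$. The lower bound follows symmetrically from $K_t^1-K_t^2\geq -(c_0^1-c_0^2)^+-\nu_t$ and $S_t^1-S_t^2\geq 0$.

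No step looks genuinely difficult. The only thing to be careful about is bookkeeping with the positive-part inequalities and the signs of $c_0^1-c_0^2$ versus $c_0^2-c_0^1$; the rest is routine once the identity $\Psi_t^1-\Psi_t^2=(c_0^2-c_0^1)+(S_t^2-S_t^1)$ has been extracted from strict monotonicity of $R(t,\cdot)$.
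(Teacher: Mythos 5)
Your proposal is correct and follows essentially the same route as the paper: both reduce to the explicit formula $K^i_t=\sup_{s\in[0,t]}(\Psi^i_s)^+$ from Remark \ref{single}, derive the two-sided bound $c_0^1-c_0^2\leq \Psi^2_s-\Psi^1_s\leq c_0^1-c_0^2+\nu_s$ from the monotonicity of $R(t,\cdot)$, and then conclude via the positive-part inequalities and monotonicity of $\nu$. The only cosmetic difference is that you extract the exact identity $c_0^1+S^1_s+\Psi^1_s=c_0^2+S^2_s+\Psi^2_s$ from strict monotonicity before bounding, whereas the paper states the two inequalities directly; your observation that the Lipschitz constants $c,C$ play no role in the comparison itself is also consistent with the paper's argument.
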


\begin{proof}
Let $\Psi^i_s$ be such that $R(s,c_0^i+S^i_s+\Psi^i_s)=0$, $i=1,2$, $s\geq 0$. Recalling Remark \ref{single}, we have $K^i_t=\sup_{s\in[0,t]}(\Psi_s^i)^+$. Noting that $S^2\leq S^1$ and
\begin{align*}
R(s,c_0^1+S^1_s+\Psi^1_s)=0=R(s,c_0^2+S^2_s+\Psi^2_s),
\end{align*}
we have 
\begin{align}\label{geq}
\Psi^2_s\geq \Psi^1_s+c_0^1-c_0^2.
\end{align}
Since $S^1\leq S^2+\nu$, it follows that 
\begin{align*}
R(s,c_0^2+S^2_s+\Psi^2_s)=0\leq R(s,c_0^2+S^2_s+\nu_s+\Psi^1_s+c_0^1-c_0^2).
\end{align*}
Consequently, we have
\begin{align}\label{leq}
\Psi^2_s\leq \Psi^1_s+\nu_s+c_0^1-c_0^2.
\end{align}
Recall that $\nu\in I[0,\infty)$ is nonnegative, by Eq. \eqref{leq}, we obtain that 
\begin{align*}
K^2_t=&\sup_{s\in[0,t]}(\Psi^2_s)^+\leq \sup_{s\in[0,t]}(\Psi^1_s+\nu_s+c_0^1-c_0^2)^+\\
\leq &\sup_{s\in[0,t]}(\Psi^1_s+\nu_t+c_0^1-c_0^2)^+\\
\leq &\sup_{s\in[0,t]}(\Psi^1_s)^++\nu_t+(c_0^1-c_0^2)^+\\
=&K^1_t+\nu_t+(c_0^1-c_0^2)^+.
\end{align*} 
Applying Eq. \eqref{geq} yields that 
\begin{align*}
K^1_t=&\sup_{s\in[0,t]}(\Psi^1_s)^+\leq \sup_{s\in[0,t]}(\Psi^2_s+c_0^2-c_0^1)^+\\
\leq &\sup_{s\in[0,t]}(\Psi^2_s)^++(c_0^2-c_0^1)^+=K^2_t+(c_0^2-c_0^1)^+.
\end{align*}
We obtain Property 1. Based on Property 1, together with the facts that $K^2=X^2-c_0^2-S^2$ and $S^1\geq S^2$, it is easy to check that
\begin{align*}
X^1=&c_0^1+S^1+K^1\geq c_0^1+S^1+K^2-\nu-(c^1_0-c^2_0)^+\\
=&c_0^1-c_0^2+S^1-S^2+X^2-\nu-(c^1_0-c^2_0)^+\\
\geq &X^2-\nu-(c^2_0-c^1_0)^+.
\end{align*}
On the other hand, since $S^1\leq S^2+\nu$, we obtain that 
\begin{align*}
X^1=&c_0^1+S^1+K^1\leq c_0^1+S^1+K^2+(c^2_0-c^1_0)^+\\
=&c_0^1-c_0^2+S^1-S^2+X^2+(c^2_0-c^1_0)^+\\
\leq &X^2+\nu+(c^1_0-c^2_0)^+.
\end{align*}
The proof is complete.
\end{proof}

Now we establish the comparison property for the double nonlinear reflected problem.
\begin{proposition}\label{cor4.2}
Let Assumption \ref{ass1} hold. Given $c^i_0\in\mathbb{R}$ and $S^i\in D[0,\infty)$ with $S^1_0=S^2_0=0$, $i=1,2$, suppose that there exists a nonnegative $\nu\in I[0,\infty)$ such that $S^2\leq S^1\leq S^2+\nu$. Let $(X^i,K^i)=\mathbb{SP}_L^R(c_0^i+S^i)$, $i=1,2$. Then, we have
\begin{itemize}
\item[1.] $K_t^1-(c^2_0-c^1_0)^+\leq K^2_t\leq K^1_t+\nu_t+(c_0^1-c_0^2)^+$;
\item[2.] $X_t^2-\nu_t-(c^2_0-c^1_0)^+\leq X^1_t\leq X^2_t+\nu_t+(c_0^1-c_0^2)^+$.
\end{itemize}
\end{proposition}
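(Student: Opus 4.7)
The plan is to leverage the explicit formula \eqref{K} together with a simple algebraic observation about $\Phi$ and $\Psi$ that bypasses the need for a new variational argument.

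First I would observe that, since $L(t,\cdot)$ and $R(t,\cdot)$ each have a unique zero by Assumption \ref{ass1}(ii), the defining relations \eqref{phipsi} applied to the two inputs $c_0^i+S^i$ give, for every $t\geq 0$,
$$
(c_0^1+S^1_t)+\Phi^1_t=(c_0^2+S^2_t)+\Phi^2_t,\qquad (c_0^1+S^1_t)+\Psi^1_t=(c_0^2+S^2_t)+\Psi^2_t.
$$
Setting $\delta_t:=(c_0^1-c_0^2)+(S^1_t-S^2_t)$, this forces
$$
\Phi^2_t-\Phi^1_t=\Psi^2_t-\Psi^1_t=\delta_t,
$$
and the hypotheses $S^1_0=S^2_0=0$ and $S^2\le S^1\le S^2+\nu$ with $\nu$ nondecreasing and nonnegative translate into $\delta_0=c_0^1-c_0^2$ and $c_0^1-c_0^2\le \delta_t\le c_0^1-c_0^2+\nu_t$ for all $t\geq 0$.

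Next I would substitute $-\Phi^2_\cdot=-\Phi^1_\cdot-\delta$ and $-\Psi^2_\cdot=-\Psi^1_\cdot-\delta$ directly into formula \eqref{K} for $K^2_t$. Using the identities $(a\wedge b)+c=(a+c)\wedge(b+c)$ and $(a\vee b)+c=(a+c)\vee(b+c)$, the shift-invariance of $\sup$ and $\inf$, the elementary inequalities $(a+b)^+\le a^++b^+$ and $(a+b)^+\ge a^+-b^-$, and the lower bound $\delta_r\ge c_0^1-c_0^2\ge -(c_0^2-c_0^1)^+$, one obtains $-K^2_t\le -K^1_t+(c_0^2-c_0^1)^+$. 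Symmetrically, the upper bound $\delta_r\le (c_0^1-c_0^2)^++\nu_t$ for $r\in[0,t]$ (which uses that $\nu$ is nondecreasing, so $\sup_{r\le t}\nu_r=\nu_t$) yields $-K^2_t\ge -K^1_t-(c_0^1-c_0^2)^+-\nu_t$. Together these two bounds are Property 1.

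Property 2 is then an immediate corollary: writing $X^i=c_0^i+S^i+K^i$ one combines Property 1 with $0\le S^1_t-S^2_t\le \nu_t$ exactly as in the last block of the proof of Proposition \ref{lem4.1}. The only real difficulty is the bookkeeping in the middle paragraph: although every individual manipulation is routine, the nested $\sup$/$\inf$/$\wedge$ structure of \eqref{K} must be unwound carefully so that the time-dependent shift $\delta_t$ produces exactly the constants $\nu_t$ and $(c_0^1-c_0^2)^\pm$ claimed in the statement, rather than something weaker like the total variation of $\delta$ or a uniform bound in $t$. Because the argument proceeds directly from the explicit formula, there is no need to invoke the piecewise-monotone structure of $K$ from Theorem \ref{thm2.2}, which keeps the proof substantially shorter than the corresponding comparison in the linear two-sided case.
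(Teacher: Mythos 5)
Your proposal is correct and follows essentially the same route as the paper: the paper likewise reduces to the two-sided bounds $\Phi^1_s+c_0^1-c_0^2\le\Phi^2_s\le\Phi^1_s+\nu_s+c_0^1-c_0^2$ (and the analogue for $\Psi$) --- which are exactly your translation identity $\Phi^2-\Phi^1=\Psi^2-\Psi^1=\delta$ combined with $c_0^1-c_0^2\le\delta_t\le c_0^1-c_0^2+\nu_t$ --- and then pushes these through the explicit formula \eqref{K} via the monotonicity of the nested $\sup$, $\inf$, $\vee$, $\wedge$ operations, concluding Property 2 from Property 1 just as you do. Your packaging of the first step as an exact identity rather than two inequalities is marginally cleaner but not a different argument, and, like the paper, you correctly avoid any appeal to the piecewise-monotone representation of Theorem \ref{thm2.2}.
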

\begin{proof}
The analysis in the proof of Lemma \ref{lem4.1} implies that 
\begin{align*}
&\Psi^1_s+c_0^1-c_0^2\leq \Psi^2_s\leq \Psi^1_s+\nu_s+c_0^1-c_0^2,\\
&\Phi^1_s+c_0^1-c_0^2\leq \Phi^2_s\leq \Phi^1_s+\nu_s+c_0^1-c_0^2.
\end{align*}
By Theorem \ref{main}, we have
\begin{align*}
K^i_t=\min\left((-(\Phi_0^i)^-)\vee \sup_{r\in[0,t]}\Psi^i_r, \inf_{s\in[0,t]}\left[\Phi^i_s\vee \sup_{r\in[s,t]}\Psi^i_r\right]\right).
\end{align*}
For any $0\leq s\leq t$, it is easy to check that 
\begin{align*}
&\sup_{r\in[s,t]}\Psi^1_r+c^1_0-c^2_0\leq \sup_{r\in[s,t]}\Psi^2_r\leq \sup_{r\in[s,t]}(\Psi^1_r+\nu_r+c_0^1-c^2_0)\leq \sup_{r\in[s,t]}\Psi^1_r+\nu_t+c_0^1-c^2_0,\\
&\Phi^1_s+c_0^1-c_0^2\leq \Phi^2_s\leq \Phi^1_s+\nu_s+c_0^1-c_0^2\leq \Phi^1_s+\nu_t+c_0^1-c_0^2.
\end{align*}
Then, we have
\begin{align*}
&-(\Phi^2_0)^-\leq -(\Phi^1_0+\nu_t+c_0^1-c_0^2)^-\leq -(\Phi_0^1)^-+\nu_t+(c_0^1-c_0^2)^+,\\
&-(\Phi^2_0)^-\geq -(\Phi^1_0+c_0^1-c_0^2)^-\geq -(\Phi^1_0)^--(c_0^1-c_0^2)^-.
\end{align*}
All the above inequalities indicates that 
\begin{align*}
K^1_t-(c^1_0-c^2_0)^-\leq K^2_t\leq K^1_t+\nu_t+(c_0^1-c_0^2)^+.
\end{align*}
Consequently, we have
\begin{align*}
X^2_t-X^1_t=S^2_t-S^1_t+c^2_0-c^1_0+K^2_t-K^1_t\leq c^2_0-c^1_0+\nu_t+(c_0^1-c_0^2)^+=\nu_t+(c_0^1-c_0^2)^-
\end{align*}
and 
\begin{align*}
X^2_t-X^1_t=S^2_t-S^1_t+c^2_0-c^1_0+K^2_t-K^1_t\geq -\nu_t+c^2_0-c^1_0-(c^1_0-c^2_0)^-=-\nu_t-(c^2_0-c^1_0)^-.
\end{align*}
The proof is complete.
\end{proof}

\begin{remark}
Proposition \ref{lem4.1} and Proposition \ref{cor4.2} extend the results of Lemma 4.1 and Corollary 4.2 in \cite{KLR} to the nonlinear reflecting case. Proposition \ref{cor4.2} is also the extension of Proposition 3.4 in \cite{BKR}. However, in Proposition \ref{cor4.2}, we do not need to assume that $S^1=S^2+\nu$ while this condition is proposed in Corollary 4.2 in \cite{KLR} and in Proposition 3.4 in \cite{BKR}. Moreover, suppose $(X^i,K^i)$ solve the Skorokhod problem on $[0,a]$ for $c^i_0+S^i$, $i=1,2$ with fixed $a>0$ (i.e., $(X^i,K^i)=\mathbb{SP}_L^R(c^i_0+S^i)$, with $L(t,x)=x-a$ and $R(t,x)=x$). Corollary 4.2 in \cite{KLR} shows that 
$$K_t^1-2(c^2_0-c^1_0)^+\leq K^2_t\leq K^1_t+2\nu_t+2(c_0^1-c_0^2)^+.$$
Compared with this result, our estimate in Proposition \ref{cor4.2} is more accurate. Actually, Remark 4.3 in \cite{KLR} finally provides the strengthened inequality
$$K_t^1-(c^2_0-c^1_0)^+\leq K^2_t\leq K^1_t+\nu_t+(c_0^1-c_0^2)^+.$$
It is worth pointing out that the proof needs the non-anticipatory properties. Therefore, our proof is  simpler.
\end{remark}

Proposition \ref{cor4.2} only provides the comparison between the net constraining terms $K^1$ and $K^2$. A natural question is that if we could compare the individual constraining terms. The answer is affirmative, which is presented in the following theorem generalizing the result of Theorem 1.7 in \cite{KLR} and Proposition 3.5 in \cite{BKR}.

\begin{theorem}\label{thm1.7}
Let Assumption \ref{ass1} hold. Given $c^i_0\in\mathbb{R}$ and $S^i\in D[0,\infty)$ with $S^1_0=S^2_0=0$, $i=1,2$. Suppose that there exists $\nu\in I[0,\infty)$ such that $S^1=S^2+\nu$. Let $(X^i,K^i)=\mathbb{SP}_L^R(c_0^i+S^i)$ with decomposition $K^i=K^{i,r}-K^{i,l}$, $i=1,2$. Then, we have
\begin{itemize}
\item[1.] $K_t^{1,r}-(c^2_0-c^1_0)^+\leq K^{2,r}_t\leq K^{1,r}_t+\nu_t+(c_0^1-c_0^2)^+$;
\item[2.] $K_t^{2,l}-(c^2_0-c^1_0)^+\leq K^{1,l}_t\leq K^{2,l}_t+\nu_t+(c_0^1-c_0^2)^+$.
\end{itemize}
\end{theorem}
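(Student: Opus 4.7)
The strategy is to extract $K^{i,r}$ and $K^{i,l}$ as solutions of single-reflected Skorokhod problems via Remark \ref{rem4.9}, obtaining the explicit representations
\begin{align*}
K^{i,r}_t = \sup_{s\le t}(\Psi^{i,r}_s)^+, \qquad K^{i,l}_t = \sup_{s\le t}(\Phi^{i,l}_s)^+,
\end{align*}
where $R(s, c_0^i + S^i_s - K^{i,l}_s + \Psi^{i,r}_s) = 0$ and $L(s, c_0^i + S^i_s + K^{i,r}_s - \Phi^{i,l}_s) = 0$. The strict monotonicity of $R$ and $L$ in the spatial variable (Assumption \ref{ass1}(ii)) together with $S^1 - S^2 = \nu$ then gives the exact linear identities
\begin{align*}
\Psi^{2,r}_s - \Psi^{1,r}_s &= (c_0^1 - c_0^2) + \nu_s - (K^{1,l}_s - K^{2,l}_s),\\
\Phi^{1,l}_s - \Phi^{2,l}_s &= (c_0^1 - c_0^2) + \nu_s + (K^{1,r}_s - K^{2,r}_s).
\end{align*}

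Setting $u_s := K^{1,r}_s - K^{2,r}_s$ and $v_s := K^{1,l}_s - K^{2,l}_s$, the key algebraic observation is that if $v_s$ satisfies the claim~2 bounds $v_s \in [-(c_0^2 - c_0^1)^+, \nu_s + (c_0^1 - c_0^2)^+]$ on $[0,t]$, then using the identities $(c_0^1 - c_0^2) - (c_0^1 - c_0^2)^+ = -(c_0^2 - c_0^1)^+$ and $(c_0^1 - c_0^2) + (c_0^2 - c_0^1)^+ = (c_0^1 - c_0^2)^+$ one gets $\Psi^{2,r}_s - \Psi^{1,r}_s \in [-(c_0^2 - c_0^1)^+, \nu_s + (c_0^1 - c_0^2)^+]$ on $[0,t]$; taking positive parts (these bounds are $\ge 0$) and then suprema yields the claim~1 bounds on $u$ at time $t$. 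A symmetric computation using the second identity shows that the claim~1 bounds imply the claim~2 bounds. Hence claims~1 and~2 are equivalent on any interval $[0,t]$.

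To close the proof, I would verify the four inequalities at $t = 0$ by case analysis on the signs of $\Phi^i_0, \Psi^i_0$ (using $\Phi^1_0 - \Phi^2_0 = \Psi^1_0 - \Psi^2_0 = c_0^2 - c_0^1$, which follows from strict monotonicity, together with the explicit formula for $K^i_0$ derived from \eqref{K}). Then I would propagate the bounds to all $t > 0$ by a contradiction argument: let $\tau$ be the first time any of the four inequalities fails; by the equivalence above both claims hold on $[0,\tau)$, and the jump description from Remark \ref{remark} ($\Delta K^{i,r}_\tau = (\Delta K^i_\tau)^+$, $\Delta K^{i,l}_\tau = (\Delta K^i_\tau)^-$) combined with the defining equations for $\Psi^{i,r}_\tau, \Phi^{i,l}_\tau$ yields the bounds at $\tau$ itself, contradicting the definition of $\tau$.

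\textbf{Main obstacle.} The coupling between $K^{i,r}$ and $K^{i,l}$ (each determined as the reflector of a single-sided problem whose driver involves the other) blocks any direct application of Proposition \ref{lem4.1}; this is resolved by the equivalence of claims~1 and~2 established through the two linear identities, which is the conceptual heart of the argument. The residual technical difficulty is the propagation across potential simultaneous jumps in $K^{i,r}$ and $K^{i,l}$ at the putative first failure time $\tau$, where the joint system determining $\Psi^{i,r}_\tau, \Phi^{i,l}_\tau, K^{i,r}_\tau, K^{i,l}_\tau$ for $i = 1, 2$ must be analyzed using the Lipschitz bounds in Assumption \ref{ass1}(iii).
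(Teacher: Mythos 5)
Your reduction via Remark \ref{rem4.9} to the single-constraint representations $K^{i,r}_t=\sup_{s\le t}(\Psi^{i,r}_s)^+$, $K^{i,l}_t=\sup_{s\le t}(\Phi^{i,l}_s)^+$ and the two exact linear identities are correct, and the resulting implication ``claim 2 on $[0,t]$ $\Rightarrow$ claim 1 on $[0,t]$'' (and its converse) is a genuine and useful observation; the paper uses precisely this computation to dispose of the $K^{l}$-violation case in its Step 2. The verification at $t=0$ and the jump analysis at the putative first failure time $\tau$ (using $\Delta K^{i,r}=(\Delta K^i)^+$, $\Delta K^{i,l}=(\Delta K^i)^-$, so that at most one of the two components jumps, together with $R(\tau,X^2_\tau)=0$ when $K^{2,r}$ jumps) also match the paper's Step 1 and are sound.

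The gap is in the final sentence of your propagation argument: establishing the four bounds \emph{at} $\tau$ does not contradict the definition of $\tau$. Since $\tau$ is the infimum of the failure set, the failure may occur only at times $\tau+s_n$ with $s_n\downarrow 0$, while at $\tau$ itself one of the inequalities holds with \emph{equality} — and right-continuity then gives you nothing. Your equivalence of claims 1 and 2 cannot break this either: to get claim 1 at $\tau+s$ you need claim 2 on the closed interval $[0,\tau+s]$, which is exactly what is unknown, so the two claims merely fail at the same time $\tau$. The paper's proof devotes its hardest step to exactly this scenario: assuming $K^{2,r}_{\tau+s_n}>K^{1,r}_{\tau+s_n}+\nu_{\tau+s_n}+(c^1_0-c^2_0)^+$ along $s_n\downarrow 0$, it deduces equality at $\tau$, then $R(\tau,X^1_\tau)=R(\tau,X^2_\tau)=0$, hence $X^1_\tau=X^2_\tau$ and $L(\tau,X^i_\tau)<0$; by right-continuity the $K^{i,l}$ are constant on a right-neighborhood of $\tau$, so on that neighborhood $(X^i,K^{i,r})$ solve genuine single-constraint problems started from the \emph{same} initial value, and Proposition \ref{lem4.1} then forbids the violation. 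Only after the $K^{r}$ bound is secured on some $[0,\tau+\delta]$ does the linear-identity computation (your equivalence) transfer it to the $K^{l}$ bound. You need to supply this local reduction to the single-constraint comparison (or an equivalent argument handling the equality-at-$\tau$ case); without it the circularity between the two claims is not broken past $\tau$.
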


\begin{proof}
Define
\begin{align*}
\alpha:=\inf\{t>0: K^{1,r}_t+\nu_t+(c^1_0-c^2_0)^+<K^{2,r}_t \textrm{ or } K^{1,l}_t+(c_0^2-c_0^1)^+<K^{2,l}_t\}.
\end{align*}
We claim that $\alpha=\infty$. Therefore, for any $t\geq 0$, we have
\begin{align*}
K^{1,r}_t+\nu_t+(c^1_0-c^2_0)^+\geq K^{2,r}_t \textrm{ and } K^{1,l}_t+(c_0^2-c_0^1)^+\geq K^{2,l}_t,
\end{align*}
which are the second inequality in Property 1 and the first inequality in Property 2. We will prove it by way of contradiction. Suppose that $\alpha<\infty$. The proof will be divided into the following parts.

\textbf{Step 1.} We claim that 
\begin{align}\label{eq4.12}
K^{2,r}_\alpha\leq K^{1,r}_\alpha+\nu_\alpha+(c^1_0-c^2_0)^+
\end{align}
and 
\begin{align}\label{eq4.13}
K^{2,l}_\alpha\leq K^{1,l}_\alpha+(c_0^2-c_0^1)^+.
\end{align}

First, by the definition of $\alpha$, for any $s\in[0,\alpha)$, the following two inequalities hold:
\begin{align}\label{eq4.10}
K^{2,r}_s\leq K^{1,r}_s+\nu_s+(c^1_0-c^2_0)^+
\end{align}
and 
\begin{align}\label{eq4.11}
K^{2,l}_s\leq K^{1,l}_s+(c_0^2-c_0^1)^+.
\end{align}
Noting that $\nu, K^{1,r}, K^{1,l}$ are nondecreasing, if $K^{2,r}, K^{2,l}$ are continuous at $\alpha$, \eqref{eq4.12} and \eqref{eq4.13} hold true by using \eqref{eq4.10} and \eqref{eq4.11}, respectively. Now, suppose that $K^{2,r}_\alpha>K^{2,r}_{\alpha-}$. Then we have $K^{2,l}_\alpha=K^{2,l}_{\alpha-}$ and 
\begin{align}\label{eq4.13'}
R(\alpha,X^2_\alpha)=R(\alpha,c_0^2+S^2_\alpha+K^{2,r}_\alpha-K^{2,l}_\alpha)=0.
\end{align}
On the other hand, since $(X^1,K^1)=\mathbb{SP}_L^R(c^1_0+S^1)$, we have
$$
R(\alpha,c^1_0+S^1_\alpha+K^{1,r}_\alpha-K^{1,l}_\alpha)\geq 0.
$$
Combining the above inequality and \eqref{eq4.13'} implies that
\begin{equation}\begin{split}\label{eq4.14}
K^{2,r}_\alpha\leq &c_0^1-c_0^2+S^1_\alpha-S^2_\alpha+K^{2,l}_\alpha-K^{1,l}_\alpha+K^{1,r}_\alpha\\
\leq &c_0^1-c_0^2+\nu_\alpha+K^{2,l}_{\alpha-}-K^{1,l}_{\alpha-}+K^{1,r}_\alpha,
\end{split}\end{equation}
where we have used the facts that $K^{2,l}_\alpha=K^{2,l}_{\alpha-}$, $S^1=S^2+\nu$ and $K^{1,l}$ is nondecreasing. Taking limits as $s\uparrow \alpha$ in \eqref{eq4.11} yields that
\begin{align*}
K^{2,l}_{\alpha-}-K^{1,l}_{\alpha-}\leq (c^2_0-c^1_0)^+.
\end{align*}
Plugging this inequality into Eq. \eqref{eq4.14}, we obtain that 
\begin{align*}
K^{2,r}_\alpha\leq &c_0^1-c_0^2+\nu_\alpha+(c^2_0-c^1_0)^++K^{1,r}_\alpha\\
=&K^{1,r}_\alpha+\nu_\alpha+(c^1_0-c^2_0)^+,
\end{align*}
which is indeed \eqref{eq4.12}. The proof of \eqref{eq4.13} under the assumption that $K^{2,l}_\alpha>K^{2,l}_{\alpha-}$ is similar and is thus omitted.

\textbf{Step 2.} By the definition of $\alpha$ and recalling \eqref{eq4.12} and \eqref{eq4.13}, there exists a sequence $\{s_n\}_{n\in \mathbb{N}}$ converging to $0$ decreasingly such that for any $n\in\mathbb{N}$, one of the following two cases holds
\begin{align}\label{eq4.15}
K^{2,r}_{\alpha+s_n}>K^{1,r}_{\alpha+s_n}+\nu_{\alpha+s_n}+(c^1_0-c^2_0)^+,
\end{align}
\begin{align}\label{eq4.16}
K^{2,l}_{\alpha+s_n}>K^{1,l}_{\alpha+s_n}+(c^2_0-c^1_0)^+.
\end{align}
We claim that \eqref{eq4.15} and \eqref{eq4.16} cannot hold.

Suppose that \eqref{eq4.15} holds. Letting $n$ approach infinity, the right continuity of $K^{2,r}, K^{1,r}$ and  $\nu$ implies that 
\begin{align*}
K^{2,r}_{\alpha}\geq K^{1,r}_{\alpha}+\nu_{\alpha}+(c^1_0-c^2_0)^+.
\end{align*}
The above inequality together with \eqref{eq4.12} yields that 
\begin{align}\label{eq4.17'}
K^{2,r}_{\alpha}=K^{1,r}_{\alpha}+\nu_{\alpha}+(c^1_0-c^2_0)^+.
\end{align}
We first claim that $R(\alpha,X^1_\alpha)=R(\alpha,X^2_\alpha)=0$. In fact, since $K^{1,r}+\nu$ is nondecreasing, it follows from \eqref{eq4.15} and \eqref{eq4.17'} that $K^{2,r}_{\alpha+s_n}>K^{2,r}_\alpha$ for any $n\in\mathbb{N}$. Since $s_n\downarrow 0$, we have $R(\alpha,X^2_\alpha)=0$. Applying \eqref{eq4.13}, \eqref{eq4.17'}, it is easy to check that 
\begin{align*}
0\leq R(\alpha,X^1_\alpha)=&R(\alpha,c^1_0+S^1_\alpha+K^{1,r}_\alpha-K^{1,l}_\alpha)\\
\leq &R(\alpha,c^1_0+S^2_\alpha+\nu_\alpha+K^{2,r}_\alpha-\nu_\alpha-(c_0^1-c_0^2)^+-K^{2,l}_\alpha+(c^2_0-c^1_0)^+)\\
=&R(\alpha,X^2_\alpha)=0.
\end{align*}
Hence, the claim holds true, which implies that $X^1_\alpha=X^2_\alpha$ and $L(\alpha,X^1_\alpha)=L(\alpha,X^2_\alpha)<0$. Since $X^i$, $i=1,2$ are right continuous, there exists some $\varepsilon>0$, such that for any $s\in[0,\varepsilon]$, $L(\alpha+\varepsilon, X^{1}_{\alpha+\varepsilon})<0$ and $L(\alpha+\varepsilon, X^{2}_{\alpha+\varepsilon})<0$. Thus, for any $s\in[0,\varepsilon]$, $K^{i,l}_{\alpha+s}=K^{i,l}_\alpha$, $i=1,2$. Therefore, $(X^i_{\alpha+s},K^{i,r}_{\alpha+s}-K^{i,r}_\alpha)_{s\in[0,\varepsilon]}$ can be seen as the solution to the  Skorokhod problem with one constraint $\mathbb{SP}^{R^\alpha}(S^{i,\alpha})$ on the time interval $[0,\varepsilon]$, where 
$$R^\alpha(t,x):=R(\alpha+t,x), \ S^{i,\alpha}_t:=X^i_\alpha+S^{i}_{\alpha+t}-S^{i}_\alpha.$$
Applying Property 1 of Proposition \ref{lem4.1} and noting that $X^1_\alpha=X^2_\alpha$, for any $s\in[0,\varepsilon]$, we have
\begin{align*}
K^{2,r}_{\alpha+s}-K^{2,r}_\alpha\leq K^{1,r}_{\alpha+s}-K^{1,r}_\alpha+\nu_{\alpha+s}-\nu_\alpha.
\end{align*}
Plugging \eqref{eq4.17'} into the above equation implies that 
\begin{align*}
K^{2,r}_{\alpha+s}\leq K^{1,r}_{\alpha+s}+\nu_{\alpha+s}+(c^1_0-c^2_0)^+,
\end{align*}
which contradicts \eqref{eq4.15}. Thus, \eqref{eq4.15} cannot hold.

It remains to show \eqref{eq4.16} does not hold. By the above analysis, together with \eqref{eq4.10} and \eqref{eq4.12}, there exists some $\delta>0$, such that for any $s\in[0,\alpha+\delta]$,
\begin{align}\label{eq4.17}
K^{2,r}_{s}\leq K^{1,r}_{s}+\nu_{s}+(c^1_0-c^2_0)^+.
\end{align}
Recalling Remark \ref{rem4.9}, $(-X^i,K^{i,l})$ may be interpreted as the solution to the Skorokhod problem with one constraint $\mathbb{SP}^{\tilde{L}}(-c^i_0-S^i-K^{i,r})$, $i=1,2$, where $\tilde{L}(t,x)=-L(t,-x)$. For any $s\geq 0$, let $\Phi^{i.l}_s$ be the solution to the following equation
$$
\tilde{L}(s,-c^i_0-S^i_s-K^{i,r}_s+\Phi^{i,l}_s)=0.
$$
Then, for any $t\in[0,\alpha+\delta]$, we have $c^1_0+S^1_t+K^{1,r}_t-\Phi^{1,l}_t=c^2_0+S^2_t+K^{2,r}_t-\Phi^{2,l}_t$ and 
\begin{align*}
K^{2,l}_t=&\sup_{s\in[0,t]}(\Phi^{2,l}_s)^+=\sup_{s\in[0,t]}(c^2_0+S^2_s+K^{2,r}_s-c^1_0-S^1_s-K^{1,r}_s+\Phi^{1,l}_s)^+\\
\leq &\sup_{s\in[0,t]}(c^2_0-c^1_0-\nu_s+\nu_s+(c^1_0-c^2_0)^++\Phi^{1,l}_s)^+\\
\leq &\sup_{s\in[0,t]}(\Phi^{1,l}_s)^++(c^2_0-c^1_0)^+=K^{1,l}_t+(c^2_0-c^1_0)^+,
\end{align*}
where we have used \eqref{eq4.17} and the fact that $S^2=S^1+\nu$. However, the above inequality contradicts \eqref{eq4.16}. All the above analysis indicates that neither \eqref{eq4.15} nor \eqref{eq4.16} holds, which means that $\alpha=\infty$. That is, the second inequality in property 1 and the first inequality in property 2 are satisfied.

Now, set 
\begin{align*}
\beta:=\inf\{t>0: K^{2,l}_t+\nu_t+(c_0^1-c_0^2)^+<K^{1,l}_t \textrm{ or } K^{2,r}_t+(c_0^2-c_0^1)^+<K^{1,r}_t\}.
\end{align*}
By a similar analysis as above, we may show that $\beta=\infty$. This implies that the first inequality in property 1 and the second inequality in property 2 are satisfied. The proof is complete.
\end{proof}

\begin{remark}
Applying the comparison properties Proposition \ref{cor4.2} and Theorem \ref{thm1.7}, we can also show that the solution to the Skorokhod problem $\mathbb{SP}_L^R(S)$ is unique.
\end{remark}

All the above results in this subsection show the comparison properties of solutions to Skorokhod problems with respect to the input function $S$. In the following, we provide the monotonicity property of the individual constraining functions with respect to the nonlinear reflecting boundaries $L,R$.

\begin{lemma}\label{lemma3.1}
Suppose $(L^i,R^i)$ satisfy Assumption \ref{ass1}, $i=1,2$ with $R^1\equiv R^2$ and $L^1\leq L^2$. For any given $S\in D[0,\infty)$, let $(X^i,K^i)$ be the solution to the Skorokhod problem $\mathbb{SP}_{L^i}^{R^i}(S)$ with $K^i=K^{i,r}-K^{i,l}$. Then, for any $t\geq 0$, we have
\begin{align*}
K^{2,r}_t\geq K^{1,r}_t \textrm{ and } K^{2,l}_t\geq K^{1,l}_t.
\end{align*}
\end{lemma}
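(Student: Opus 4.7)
The plan is to prove the comparison via a monotone Picard-type iteration built from the single-constraint supremum representations of Remark~\ref{rem4.9}.

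Set $\rho(s) := R^{-1}(s, 0)$ (common to both problems, since $R^1 \equiv R^2$) and $\ell^i(s) := L^{i,-1}(s, 0)$. Strict monotonicity of $L^i(s, \cdot)$ together with $L^1 \leq L^2$ gives $\ell^1 \geq \ell^2$ pointwise. For each $i \in \{1, 2\}$, define $K^{i,r,0} \equiv K^{i,l,0} \equiv 0$ and iterate, for $n \geq 1$,
\begin{align*}
K^{i,l,n}_t &:= \sup_{s \in [0, t]} \bigl( S_s + K^{i,r,n-1}_s - \ell^i(s) \bigr)^+, \\
K^{i,r,n}_t &:= \sup_{s \in [0, t]} \bigl( \rho(s) - S_s + K^{i,l,n}_s \bigr)^+.
\end{align*}
This is the Gauss--Seidel iteration for the pair of fixed-point identities that $(K^{i,r}, K^{i,l})$ satisfies in Remark~\ref{rem4.9}, so the true solution is itself a fixed point of the recursion.

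A direct induction on $n$ shows the sequences $(K^{i,r,n})_n$ and $(K^{i,l,n})_n$ are nondecreasing in $n$ and bounded above by $K^{i,r}$ and $K^{i,l}$ respectively, hence converge pointwise to limits $(K^{i,r,\infty}, K^{i,l,\infty})$. Passing to the limit inside the supremum via monotone convergence, these limits satisfy the same fixed-point identities, so that setting $X^\infty := S + K^{i,r,\infty} - K^{i,l,\infty}$ the pair $(X^\infty, K^{i,r,\infty} - K^{i,l,\infty})$ solves $\mathbb{SP}_{L^i}^{R^i}(S)$ in the sense of Definition~\ref{def}, via the two single-constraint Skorokhod characterizations underlying Remark~\ref{rem4.9}; uniqueness from Proposition~\ref{uniqueness} then identifies the limits with $(K^{i,r}, K^{i,l})$.

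The main step is a second induction on $n$ showing $K^{1,l,n} \leq K^{2,l,n}$ and $K^{1,r,n} \leq K^{2,r,n}$ for all $n$. This is trivial at $n = 0$. For the step, assuming $K^{1,r,n-1} \leq K^{2,r,n-1}$, the inequality $\ell^1 \geq \ell^2$ yields $S_s + K^{1,r,n-1}_s - \ell^1(s) \leq S_s + K^{2,r,n-1}_s - \ell^2(s)$; applying $(\cdot)^+$ and $\sup_{[0, t]}$ gives $K^{1,l,n} \leq K^{2,l,n}$, and the same operation applied to $\rho - S + K^{i,l,n}$ then gives $K^{1,r,n} \leq K^{2,r,n}$. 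Sending $n \to \infty$ and using the identification from the previous paragraph yields $K^{1,r} \leq K^{2,r}$ and $K^{1,l} \leq K^{2,l}$, as required.

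The main obstacle I expect is the identification of the monotone Picard limit with the true solution: once the two monotonicity inductions are in place, the argument reduces to checking that any fixed point $(K^r, K^l)$ of the recursion produces, together with $X = S + K^r - K^l$, a pair satisfying (i)--(iii) of Definition~\ref{def}. This step requires combining the two single-constraint Skorokhod characterizations in Remark~\ref{rem4.9} so as to recover the complementarity conditions for $K^r$ and $K^l$ simultaneously, after which Proposition~\ref{uniqueness} closes the argument.
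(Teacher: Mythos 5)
Your route is genuinely different from the paper's: the paper first deduces $K^1\geq K^2$ and $X^1\geq X^2$ from the explicit formula \eqref{K}, reduces the claim to $K^{2,r}\geq K^{1,r}$, and argues by contradiction at the first time $t^*$ where this fails, handling a possible jump at $t^*$ via Remark \ref{remark} and then localizing after $t^*$ with the non-anticipatory property and the one-sided representation of Remark \ref{single}. Your monotone Gauss--Seidel iteration built on the fixed-point system of Remark \ref{rem4.9} is an attractive alternative, and the two inductions you describe (monotonicity in $n$ with the bound $K^{i,\cdot,n}\leq K^{i,\cdot}$, and the comparison $K^{1,\cdot,n}\leq K^{2,\cdot,n}$ from $\ell^1\geq\ell^2$ and $\rho^1\equiv\rho^2$) are correct as stated. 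The difficulty is concentrated, as you anticipate, in identifying the limit with $(K^{i,r},K^{i,l})$, and there are two concrete holes there.

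First, even granting that the limit pair satisfies the fixed-point identities and hence, via the two single-constraint characterizations, conditions (i)--(iii) of Definition \ref{def}, Proposition \ref{uniqueness} only yields $K^{i,r,\infty}-K^{i,l,\infty}=K^i$; it says nothing about the individual components, and your conclusion needs $K^{i,r,\infty}=K^{i,r}$ and $K^{i,l,\infty}=K^{i,l}$ separately. To close this you must add that the decomposition in (iii) is unique, which follows because Assumption \ref{ass1}(iv) forces $\{s:R(s,X_s)=0\}$ and $\{s:L(s,X_s)=0\}$ to be disjoint, so $dK^{r}$ and $dK^{l}$ are mutually singular and $K^{r}-K^{l}$ is the (unique) Jordan decomposition of $K$. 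Second, for the limit to solve the Skorokhod problem at all it must lie in $D[0,\infty)$, and a pointwise nondecreasing limit of c\`adl\`ag nondecreasing functions need not be right-continuous (e.g.\ $f_n(t)=\min(nt,1)$). Right-continuity of $K^{i,r,\infty}$ and $K^{i,l,\infty}$ is therefore not free: the running supremum $\sup_{s\in[0,t]}(S_s+K^{i,r,\infty}_s-\ell^i(s))^+$ is right-continuous only if $K^{i,r,\infty}$ is, and the two regularity statements feed into each other, so you need a separate argument (e.g.\ ruling out a simultaneous right-discontinuity of both limits, or working with right-continuous regularizations and checking they still satisfy the fixed-point system). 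Until these two points are supplied, the identification step, and with it the final comparison, is not complete.
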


\begin{proof}
 For any $t\geq 0$, let $\Phi^i_t$, $\Psi_t^i$, $i=1,2$ be the solutions to the following equations, respectively
\begin{align*}
L^i(t,S_t+\Phi^i_t)=0, \ R^i(t,S_t+\Psi^i_t)=0.
\end{align*}
It is easy to check that $\Psi^1_t=\Psi_t^2$ and $\Phi^1_t\geq \Phi^2_t$. By Theorem \ref{main}, we have $X^i_t=S_t+K^i_t$, $i=1,2$, where
\begin{align*}
K^i_t=\min\left((-(\Phi_0^i)^-)\vee \sup_{r\in[0,t]}\Psi^i_r, \inf_{s\in[0,t]}\left[\Phi^i_s\vee \sup_{r\in[s,t]}\Psi^i_r\right]\right).
\end{align*}
Then, we obtain that $K^1_t\geq K^2_t$ and 
\begin{align}\label{equa3.4}
X^1_t\geq X^2_t.
\end{align}
 Note that $K^{i,l}_t=-K^i_t+K^{i,r}_t$. It suffices to prove that for any $t\geq 0$,
\begin{align}\label{equa3.5}
K^{2,r}_t\geq K^{1,r}_t.
\end{align} 

At time $0$, if $R^1(0,S_0)=R^2(0,S_0)\geq 0$, then we have $K^{1,r}_0=K^{2,r}_0=0$. If $R^1(0,S_0)=R^2(0,S_0)< 0$, then we have $K^{1,r}_0=K^{2,r}_0=\Psi^1_0>0$. Hence, \eqref{equa3.5} holds at the initial time.

Now, set 
\begin{align*}
t^*:=\inf\{s\geq 0: K^{2,r}_s<K^{1,r}_s\}.
\end{align*}
We claim that $t^*=\infty$, which completes the proof. We will prove this assertion by contradiction. Suppose that $t^*<\infty$. Then, we have
\begin{align}\label{equa3.6}
K^{2,r}_{t^*-}\geq K^{1,r}_{t^*-}
\end{align}
and for any $\varepsilon_0>0$, there exists $\varepsilon\in(0,\varepsilon_0)$ such that 
\begin{align}\label{equa3.7}
K^{2,r}_{t^*+\varepsilon}< K^{1,r}_{t^*+\varepsilon}.
\end{align}

We first show that $K^{2,r}_{t^*}\geq K^{1,r}_{t^*}$. If $K^{1,r}_{t^*}-K^{1,r}_{t^*-}=0$, it is clear that $K^{2,r}_{t^*}-K^{2,r}_{t^*-}\geq K^{1,r}_{t^*}-K^{1,r}_{t^*-}$ since $K^{2,r}$ is nondecreasing. For the case that $K^{1,r}_{t^*}-K^{1,r}_{t^*-}>0$, we have $R^1(t^*,X^1_{t^*})=0$. The facts that $R^1\equiv R^2$ and $(X^2,K^2)=\mathbb{SP}_{L^2}^{R^2}(S)$ imply that
\begin{align*}
R^2(t^*,X^1_{t^*})=0\leq R^2(t^*,X^2_{t^*}),
\end{align*}
which together with \eqref{equa3.4} indicates that $X^2_{t^*}= X^1_{t^*}$. Recalling Remark \ref{remark}, we obtain that 
\begin{align*}
K^{2,r}_{t^*}-K^{2,r}_{t^*-}=&(X^{2}_{t^*}-X^{2}_{t^*-}-(S_{t^*}-S_{t^*-}))^+\\
\geq &(X^{1}_{t^*}-X^{1}_{t^*-}-(S_{t^*}-S_{t^*-}))^+\\
=&K^{1,r}_{t^*}-K^{1,r}_{t^*-},
\end{align*}
where we have used \eqref{equa3.4} in the second inequality. Therefore, the inequality $K^{2,r}_{t^*}-K^{2,r}_{t^*-}\geq K^{1,r}_{t^*}-K^{1,r}_{t^*-}$ always holds true. Combining with \eqref{equa3.6} yields that $K^{2,r}_{t^*}\geq K^{1,r}_{t^*}$. Then, we consider the following two cases.

\textbf{Case 1. } $K^{2,r}_{t^*}> K^{1,r}_{t^*}$. Due to the right-continuity of $K^{1,r}$ and $K^{2,r}$, for $\varepsilon>0$ small enough, we have $K^{2,r}_{t^*+\varepsilon}> K^{1,r}_{t^*+\varepsilon}$, which contradicts \eqref{equa3.7}.

\textbf{Case 1. } $K^{2,r}_{t^*}=K^{1,r}_{t^*}$. Noting that $K^{i,r}$ are nondecreasing, by \eqref{equa3.7}, for any $\varepsilon_0>0$, there exists $\varepsilon\in(0,\varepsilon_0)$ such that $K^{1,r}_{t^*}<K^{1,r}_{t^*+\varepsilon}$. According to Definition \ref{def}, this implies that $R^1(t^*,X^1_{t^*})=0$. Recalling \eqref{equa3.4} and the fact that $R^1\equiv R^2$, it follows that 
$$0=R^2(t^*,X^1_{t^*})\geq R^2(t^*,X^2_{t^*})\geq 0,$$
 which indicates that $X^1_{t^*}=X^2_{t^*}$ and $R^1(t^*,X^1_{t^*})=R^{2}(t^*,X^2_{t^*})=0$. Consequently, we have 
 $$L^1(t^*,X^1_{t^*})<0 \textrm{ and } L^{2}(t^*,X^2_{t^*})<0.$$
  Due to the right-continuity of $X^i$, there exists some $\delta>0$ small enough, such that for any $t\in[t^*,t^*+\delta]$, 
\begin{align*}
L^1(t,X^1_t)<0, \ L^2(t,X^2_t)<0.
\end{align*}
Therefore, for any $t\in[t^*,t^*+\delta]$, we have $K^{i,l}_t=K^{i,l}_{t^*}$ and thus $K^i_t-K^i_{t^*}=K^{i,r}_t-K^{i,r}_{t^*}$, $i=1,2$. Then, similar as Theorem \ref{thm3.1}, we deduce that on the time interval $[0,\delta]$, 
$$(H_{t^*}(X^i),T_{t^*}(K^{i,r}))=\mathbb{SP}^{R^{i,t^*}}(X^i_{t^*}+T_{t^*}(S)),\  i=1,2,$$
 where $R^{i,t^*}(t,x)=R^i(t+t^*,x)$ and $T_{t^*},H_{t^*}$ are defined in \eqref{tdhd}.  For any $s\in[0,\delta]$, let $\Psi^{i,t^*}_s$, $i=1,2$, be the solution to the following equation
\begin{align*}
R^{i,t^*}(s,X^i_{t^*}+(T_{t^*}(S))_s+\Psi^{i,t^*}_s)=0.
\end{align*}
Since $R^1\equiv R^2$ and $X^1_{t^*}=X^2_{t^*}$, it follows that $\Psi^{1,t^*}_s=\Psi^{2,t^*}_s$. By Remark \ref{single}, we have 
\begin{align*}
K^{1,r}_{t^*+\delta}-K^{1,r}_{t^*}=(T_{t^*}(K^{1,r}))_\delta=\sup_{s\in[0,\delta]}(\Psi^{1,t^*}_s)^+=\sup_{s\in[0,\delta]}(\Psi^{2,t^*}_s)^+=(T_{t^*}(K^{2,r}))_\delta=K^{2,r}_{t^*+\delta}-K^{2,r}_{t^*}.
\end{align*} 
Since we are considering the case $K^{2,r}_{t^*}=K^{1,r}_{t^*}$, we deduce that $K^{2,r}_{t^*+\delta}=K^{1,r}_{t^*+\delta}$, which contradicts \eqref{equa3.7}.

Therefore, all the above analysis indicates that $t^*=\infty$, and the desired result holds true.
\end{proof}

\begin{remark}
Suppose that $l,\tilde{l},r,\tilde{r}\in D[0,\infty)$ with $l\equiv \tilde{l}$, $r\leq \tilde{r}$ and $\inf_{t\geq 0}(r_t-l_t)>0$. Let $R^1(t,x)=x-\tilde{l}_t$, $R^2(t,x)=x-l_t$, $L^1(t,x)=x-\tilde{r}_t$ and $L^2(t,x)=x-r_t$. Clearly, $R^i$, $L^i$, $i=1,2$ satisfy the assumptions in Lemma \ref{lemma3.1}. In this case, our result degenerates to the result in Lemma 3.1 in \cite{BKR}.
\end{remark}

By a similar analysis as the proof of Lemma \ref{lemma3.1}, we obtain the following lemma.

\begin{lemma}\label{lemma3.2}
Suppose $(L^i,R^i)$ satisfy Assumption \ref{ass1}, $i=1,2$ with $R^1\geq R^2$ and $L^1\equiv L^2$. For any given $S\in D[0,\infty)$, let $(X^i,K^i)$ be the solution to the Skorokhod problem $\mathbb{SP}_{L^i}^{R^i}(S)$ with $K^i=K^{i,r}-K^{i,l}$. Then, for any $t\geq 0$, we have
\begin{align*}
K^{2,r}_t\geq K^{1,r}_t \textrm{ and } K^{2,l}_t\geq K^{1,l}_t.
\end{align*}
\end{lemma}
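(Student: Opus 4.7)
The strategy is to mirror the proof of Lemma \ref{lemma3.1}, exchanging the roles of the upper and lower boundaries. Since $L^1\equiv L^2$, the solutions of $L^i(t,S_t+\Phi^i_t)=0$ satisfy $\Phi^1\equiv\Phi^2$; since $R^1\geq R^2$ and each $R^i(t,\cdot)$ is strictly increasing, one deduces $\Psi^1\leq\Psi^2$. Substituting into the explicit formula \eqref{K} immediately yields $K^1\leq K^2$ and thus $X^1\leq X^2$. It then suffices to establish $K^{2,l}\geq K^{1,l}$, because together with $K^2-K^1\geq 0$ this gives $K^{2,r}-K^{1,r}=(K^2-K^1)+(K^{2,l}-K^{1,l})\geq K^{2,l}-K^{1,l}\geq 0$.

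Set $t^*:=\inf\{s\geq 0:K^{2,l}_s<K^{1,l}_s\}$. At time $0$, a direct case analysis using Remark \ref{rem1}(i) gives $K^{i,l}_0=(\Phi^i_0)^-$, so $K^{1,l}_0=K^{2,l}_0$ since $\Phi^1=\Phi^2$. I argue by contradiction, assuming $t^*<\infty$. The definition of $t^*$ yields $K^{2,l}_{t^*-}\geq K^{1,l}_{t^*-}$. If $K^{1,l}$ is continuous at $t^*$, then $K^{2,l}_{t^*}\geq K^{2,l}_{t^*-}\geq K^{1,l}_{t^*-}=K^{1,l}_{t^*}$. Otherwise $K^{1,l}_{t^*}>K^{1,l}_{t^*-}$, and Remark \ref{remark} forces $L^1(t^*,X^1_{t^*})=0$; combined with $L^1\equiv L^2$, $X^1\leq X^2$, and $L^2(t^*,X^2_{t^*})\leq 0$, the strict monotonicity of $L^2$ in $x$ forces $X^1_{t^*}=X^2_{t^*}$. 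The jump identities $K^{i,l}_{t^*}-K^{i,l}_{t^*-}=(X^i_{t^*}-X^i_{t^*-}-(S_{t^*}-S_{t^*-}))^-$ from Remark \ref{remark}, together with $X^1_{t^*-}\leq X^2_{t^*-}$ and the monotonicity of $(\cdot)^-$, then deliver $K^{2,l}_{t^*}\geq K^{1,l}_{t^*}$.

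Two subcases remain. If $K^{2,l}_{t^*}>K^{1,l}_{t^*}$, right-continuity contradicts the definition of $t^*$ at once. The delicate subcase is $K^{2,l}_{t^*}=K^{1,l}_{t^*}$: then the definition of $t^*$ furnishes $s_n\downarrow 0$ with $K^{1,l}_{t^*+s_n}>K^{1,l}_{t^*}$, so $K^{1,l}$ strictly increases on each $[t^*,t^*+s_n]$. Since the measure $dK^{1,l}$ is supported on $\{L^1(\cdot,X^1_\cdot)=0\}$, there exist $r_n\downarrow t^*$ with $L^1(r_n,X^1_{r_n})=0$, and right-continuity of $L^1$ in $t$ and of $X^1$ then yields $L^1(t^*,X^1_{t^*})=0$. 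Repeating the argument of the previous paragraph, one obtains $X^1_{t^*}=X^2_{t^*}$ and $L^i(t^*,X^i_{t^*})=0$; Assumption \ref{ass1}(iv) gives $R^i(t^*,X^i_{t^*})>0$, and right-continuity produces $\delta>0$ such that $R^i(s,X^i_s)>0$ on $[t^*,t^*+\delta]$, so each $K^{i,r}$ is constant there.

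The last step, which I expect to be the main obstacle, is to upgrade this to the identity $K^{1,l}\equiv K^{2,l}$ on $[t^*,t^*+\delta]$, contradicting $K^{1,l}_{t^*+s_n}>K^{2,l}_{t^*+s_n}$. Using Theorem \ref{thm3.1} in combination with Remark \ref{rem4.9}, the pairs $(H_{t^*}(-X^i),T_{t^*}(K^{i,l}))$ solve, on $[0,\delta]$, single-reflected Skorokhod problems governed by $\tilde{L}^{i,t^*}(s,x):=-L^i(t^*+s,-x)$; since $K^{i,r}$ is constant on $[t^*,t^*+\delta]$, since $\tilde{L}^1\equiv\tilde{L}^2$, and since the driving inputs coincide because $X^1_{t^*}=X^2_{t^*}$, Proposition \ref{lem4.1} applied with $\nu\equiv 0$ and equal initial data forces $T_{t^*}(K^{1,l})=T_{t^*}(K^{2,l})$ on $[0,\delta]$. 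This is the sought contradiction, so $t^*=\infty$, completing the proof.
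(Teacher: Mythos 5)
Your proposal is correct and is essentially the argument the paper intends: the paper gives no separate proof of Lemma \ref{lemma3.2}, stating only that it follows ``by a similar analysis as the proof of Lemma \ref{lemma3.1},'' and your write-up is precisely that mirrored analysis (now $\Phi^1\equiv\Phi^2$, $\Psi^1\leq\Psi^2$, hence $K^1\leq K^2$, reducing to $K^{2,l}\geq K^{1,l}$ via a first-crossing-time contradiction, with the jump case handled by Remark \ref{remark} and the flat case by localizing to a single-constraint problem). The only cosmetic quibble is that $L^1(t^*,X^1_{t^*})=0$ at a jump of $K^{1,l}$ comes from the support condition in Definition \ref{def}(iii) rather than from Remark \ref{remark} itself; otherwise the details check out.
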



\begin{proposition}\label{proposition3.3}
Suppose $(L^i,R^i)$ satisfy Assumption \ref{ass1}, $i=1,2$ with $R^1\geq R^2$ and $L^1\leq L^2$. For any given $S\in D[0,\infty)$, let $(X^i,K^i)$ be the solution to the Skorokhod problem $\mathbb{SP}_{L^i}^{R^i}(S)$ with $K^i=K^{i,r}-K^{i,l}$. Then, for any $t\geq 0$, we have
\begin{align*}
K^{2,r}_t\geq K^{1,r}_t \textrm{ and } K^{2,l}_t\geq K^{1,l}_t.
\end{align*}
\end{proposition}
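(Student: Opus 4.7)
The plan is to interpolate between systems $1$ and $2$ by introducing an auxiliary Skorokhod problem with nonlinear boundaries $(L^3,R^3):=(L^1,R^2)$, and then apply Lemma \ref{lemma3.1} and Lemma \ref{lemma3.2} in turn. The motivation is that going from system $1$ to system $2$ involves two independent monotone changes (decreasing the upper boundary function from $R^1$ to $R^2$ and increasing the lower boundary function from $L^1$ to $L^2$), and each change has already been shown to push both constraining forces upward.

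First I would verify that $(L^3,R^3)=(L^1,R^2)$ satisfies Assumption \ref{ass1}. Parts (i)--(iii) are inherited coordinate-wise from $L^1$ and $R^2$. For (iv), one uses $L^1\leq L^2\leq R^2$ to get
\begin{align*}
\inf_{(t,x)}(R^3(t,x)-L^3(t,x)) = \inf_{(t,x)}(R^2(t,x)-L^1(t,x)) \geq \inf_{(t,x)}(R^2(t,x)-L^2(t,x))>0.
\end{align*}
Hence the Skorokhod problem $\mathbb{SP}_{L^3}^{R^3}(S)$ has a unique solution $(X^3,K^3)$ with decomposition $K^3=K^{3,r}-K^{3,l}$ by Theorem \ref{main}.

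Next I would compare systems $1$ and $3$. Since $L^1\equiv L^3$ and $R^1\geq R^2=R^3$, Lemma \ref{lemma3.2} applies directly and yields
\begin{align*}
K^{3,r}_t\geq K^{1,r}_t \textrm{ and } K^{3,l}_t\geq K^{1,l}_t \textrm{ for all } t\geq 0.
\end{align*}
Then I would compare systems $3$ and $2$. Since $R^3=R^2\equiv R^2$ and $L^3=L^1\leq L^2$, Lemma \ref{lemma3.1} gives
\begin{align*}
K^{2,r}_t\geq K^{3,r}_t \textrm{ and } K^{2,l}_t\geq K^{3,l}_t \textrm{ for all } t\geq 0.
\end{align*}
Chaining these two sets of inequalities via the intermediate system yields the desired conclusion.

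I do not expect a genuine obstacle here: the two-step interpolation argument is standard once the two lemmas are in place, and the only subtle point is the sanity check that the intermediate pair $(L^1,R^2)$ still satisfies the nondegeneracy condition (iv) in Assumption \ref{ass1}, which follows immediately from $L^1\leq L^2$.
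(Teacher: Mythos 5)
Your proof is correct and follows essentially the same route as the paper: the paper likewise introduces the intermediate problem $\mathbb{SP}_{L^1}^{R^2}(S)$ and chains Lemma \ref{lemma3.2} and Lemma \ref{lemma3.1} through it. Your explicit check that the intermediate pair $(L^1,R^2)$ satisfies condition (iv) of Assumption \ref{ass1} is a worthwhile detail the paper leaves implicit.
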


\begin{proof}
Let $(X^*,K^*)$ be the solution to the Skorokhod problem $\mathbb{SP}_{L^1}^{R^2}(S)$ with $K^*=K^{*,r}-K^{*,l}$. By Lemma \ref{lemma3.1}, we have 
\begin{align*}
K^{2,r}_t\geq K^{*,r}_t \textrm{ and } K^{2,l}_t\geq K^{*,l}_t.
\end{align*}
By Lemma \ref{lemma3.2}, we have
\begin{align*}
K^{*,r}_t\geq K^{1,r}_t \textrm{ and } K^{*,l}_t\geq K^{1,l}_t.
\end{align*}
The above inequalities yield the desired result.
\end{proof}

\begin{remark}
Proposition \ref{proposition3.3} is the extension of Proposition 3.3 in \cite{BKR} to the case of two nonlinear reflecting boundaries.
\end{remark}

\subsection{Continuity properties}

In this subsection, we discuss the continuity properties for the Skorokhod problems with two nonlinear reflecting boundaries under the uniform metric and $J_1$ metric $d_0$, respectively. The proof of these properties is based on the representation for $K$ obtained in \eqref{K}. Therefore, we first establish some estimates for $\Phi$ and $\Psi$.

\begin{lemma}\label{continuity in S}
Suppose that $(L^i,R^i)$ satisfy Assumption \ref{ass1}, $i=1,2$. Given $S^i\in D[0,\infty)$, $i=1,2$, for any $t\geq 0$, let $\Phi^i,\Psi^i$, $i=1,2$ be the solutions to the following equations
\begin{align*}
L^i(t,S^i_t+\Phi^i_t)=0, \ R^i(t,S^i_t+\Psi^i_t)=0.
\end{align*}
Then, we have 
\begin{align*}
&|\Phi^1_t-\Phi^{2}_t|\leq  \frac{C}{c}|S_t-S'_t|+\frac{1}{c}\sup_{x\in \mathbb{R}}|L^1(t,x)-L^2(t,x)|, \\
& |\Psi^1_t-\Psi^{2}_t|\leq  \frac{C}{c}|S_t-S'_t|+\frac{1}{c}\sup_{x\in \mathbb{R}}|R^1(t,x)-R^2(t,x)|.
\end{align*}
\end{lemma}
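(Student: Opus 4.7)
The plan is to obtain both estimates by plugging the two defining equations into each other and applying the two-sided Lipschitz bound in condition (iii) of Assumption \ref{ass1}. I would prove the bound on $\Phi^1-\Phi^2$ in detail; the bound on $\Psi^1-\Psi^2$ is identical with $R^i$ in place of $L^i$ and $\Psi^i$ in place of $\Phi^i$.

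Fix $t\geq 0$. The key move is to introduce the auxiliary point $S^1_t+\Phi^2_t$ so that one can pivot between the two defining equations. Since $L^1(t,S^1_t+\Phi^1_t)=0$, the lower Lipschitz bound applied to $L^1(t,\cdot)$ at the points $S^1_t+\Phi^1_t$ and $S^1_t+\Phi^2_t$ yields
\begin{equation*}
c\,|\Phi^1_t-\Phi^2_t|\leq |L^1(t,S^1_t+\Phi^1_t)-L^1(t,S^1_t+\Phi^2_t)|=|L^1(t,S^1_t+\Phi^2_t)|.
\end{equation*}
Next I would decompose the right-hand side using $L^2(t,S^2_t+\Phi^2_t)=0$:
\begin{equation*}
L^1(t,S^1_t+\Phi^2_t)=\bigl[L^1(t,S^1_t+\Phi^2_t)-L^1(t,S^2_t+\Phi^2_t)\bigr]+\bigl[L^1(t,S^2_t+\Phi^2_t)-L^2(t,S^2_t+\Phi^2_t)\bigr].
\end{equation*}
The first bracket is controlled by the upper Lipschitz constant $C$ applied to $L^1(t,\cdot)$ at two points differing by $S^1_t-S^2_t$, and the second bracket is trivially bounded by $\sup_{x\in\mathbb{R}}|L^1(t,x)-L^2(t,x)|$. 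Combining,
\begin{equation*}
c\,|\Phi^1_t-\Phi^2_t|\leq C|S^1_t-S^2_t|+\sup_{x\in\mathbb{R}}|L^1(t,x)-L^2(t,x)|,
\end{equation*}
which is the required inequality after dividing by $c$.

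There is no real obstacle here; the only point to be careful about is the choice of pivot. If one instead inserts $S^2_t+\Phi^1_t$ the intermediate cancellation goes the wrong way, and if one applies the lower Lipschitz bound directly to $|L^1(t,S^1_t+\Phi^1_t)-L^2(t,S^2_t+\Phi^2_t)|$ before separating the $S$-difference, the constant in front of $|S^1_t-S^2_t|$ comes out as $1$ rather than $C/c$, which is a sharper (but different) statement; the formulation in the lemma is the one that generalises cleanly to the $\Psi$-case and that is convenient later when combined with the explicit formula \eqref{K}.
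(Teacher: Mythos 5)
Your argument is correct and is essentially the paper's own proof: both start from the lower Lipschitz bound $c|\Phi^1_t-\Phi^2_t|\leq|L^1(t,S^1_t+\Phi^1_t)-L^1(t,S^1_t+\Phi^2_t)|$, use the two defining equations to replace the zero terms, and then split off the $S$-difference with the upper Lipschitz constant $C$ and the remaining term with $\sup_{x}|L^1(t,x)-L^2(t,x)|$. The only cosmetic difference is that you apply the intermediate Lipschitz estimate to $L^1$ (pivoting at $S^2_t+\Phi^2_t$) where the paper applies it to $L^2$ (pivoting at $S^1_t+\Phi^2_t$); both are valid since each function carries the same constant $C$.
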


\begin{proof}
It suffices to prove the first inequality. Simple calculation yields that 
\begin{align*}
c|\Phi^1_t-\Phi^{2}_t|\leq &|L^1(t,S^1_t+\Phi^1_t)-L^1(t,S^1_t+\Phi^{2}_t)|=|L^2(t,S^2_t+\Phi^{2}_t)-L^1(t,S^1_t+\Phi^{2}_t)|\\
\leq &|L^2(t,S^2_t+\Phi^{2}_t)-L^2(t,S^1_t+\Phi^{2}_t)|+|L^2(t,S^1_t+\Phi^{2}_t)-L^1(t,S^1_t+\Phi^{2}_t)|\\
\leq &C|S_t^1-S_t^2|+\sup_{x\in\mathbb{R}}|L^1(t,x)-L^2(t,x)|.
\end{align*}
The proof is complete.
\end{proof}

\begin{proposition}\label{prop4.1}
Suppose that $(L^i,R^i)$ satisfy Assumption \ref{ass1}, $i=1,2$. Given $S^i\in D[0,\infty)$, let $(X^i,K^i)$ be the solution to the nonlinear Skorokhod problem $\mathbb{SP}_{L^i}^{R^i}(S^i)$. Then, we have
\begin{align*}
\sup_{t\in[0,T]}|K^1_t-K^2_t|\leq \frac{C}{c}\sup_{t\in[0,T]}|S^1_t-S^2_t|+\frac{1}{c}
(\bar{L}_T\vee\bar{R}_T),
\end{align*}
where
\begin{align*}
&\bar{L}_T:=\sup_{(t,x)\in[0,T]\times\mathbb{R}}|L^1(t,x)-L^2(t,x)|,\\
&\bar{R}_T:= \sup_{(t,x)\in[0,T]\times\mathbb{R}}|R^1(t,x)-R^2(t,x)|.
\end{align*}
\end{proposition}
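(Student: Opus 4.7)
The plan is to combine the explicit formula \eqref{K} for $K$ with the pointwise estimate for $\Phi$ and $\Psi$ from Lemma \ref{continuity in S}, using the Lipschitz behavior of the lattice operations $\wedge$, $\vee$, $\sup$, $\inf$ and the positive part $(\cdot)^+$.

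First, I would apply Lemma \ref{continuity in S} at each $t\in[0,T]$ to obtain
\begin{align*}
E_T &:= \sup_{t\in[0,T]}|\Phi^1_t-\Phi^2_t| \leq \frac{C}{c}\sup_{t\in[0,T]}|S^1_t-S^2_t|+\frac{1}{c}\bar L_T,\\
F_T &:= \sup_{t\in[0,T]}|\Psi^1_t-\Psi^2_t| \leq \frac{C}{c}\sup_{t\in[0,T]}|S^1_t-S^2_t|+\frac{1}{c}\bar R_T,
\end{align*}
so that $E_T\vee F_T\leq \frac{C}{c}\sup_{t\in[0,T]}|S^1_t-S^2_t|+\frac{1}{c}(\bar L_T\vee \bar R_T)$. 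This reduces everything to bounding $\sup_{t\in[0,T]}|K^1_t-K^2_t|$ by $E_T\vee F_T$.

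Now I would invoke \eqref{K}. Writing, for $i=1,2$,
\begin{align*}
H^i(t)&:=(-\Phi^i_0)^+\wedge \inf_{r\in[0,t]}(-\Psi^i_r),\\
C^i(t)&:=\sup_{s\in[0,t]}\left[(-\Phi^i_s)\wedge \inf_{r\in[s,t]}(-\Psi^i_r)\right],
\end{align*}
so that $K^i_t=-(H^i(t)\vee C^i(t))$. Using the elementary bounds $|a^+-b^+|\leq|a-b|$, $|a\wedge b-a'\wedge b'|\leq |a-a'|\vee |b-b'|$, $|a\vee b-a'\vee b'|\leq |a-a'|\vee |b-b'|$, together with the standard fact that $|\sup f-\sup g|\vee |\inf f-\inf g|\leq \sup|f-g|$, I get first
\begin{align*}
|H^1(t)-H^2(t)|\leq |\Phi^1_0-\Phi^2_0|\vee \sup_{r\in[0,t]}|\Psi^1_r-\Psi^2_r|\leq E_T\vee F_T,
\end{align*}
and, for each fixed $s\leq t\leq T$, $\bigl|[(-\Phi^1_s)\wedge\inf_{r\in[s,t]}(-\Psi^1_r)]-[(-\Phi^2_s)\wedge\inf_{r\in[s,t]}(-\Psi^2_r)]\bigr|\leq E_T\vee F_T$; taking $\sup_s$ yields $|C^1(t)-C^2(t)|\leq E_T\vee F_T$. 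Combining the two and applying the $\vee$-bound once more gives $|K^1_t-K^2_t|\leq E_T\vee F_T$ for every $t\in[0,T]$, which together with the estimates from the first step is exactly the claim.

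The proof is essentially a bookkeeping exercise; the only care needed is to ensure that all the Lipschitz constants of $\wedge,\vee,\sup,\inf,(\cdot)^+$ are equal to $1$ so that no additional factors accumulate, and that the outer $\inf_{r\in[s,t]}$ is handled on the same window $[s,t]$ for both solutions (so the supremum of the $\Psi$-difference is taken over $[0,T]$). There is no genuine obstacle once Lemma \ref{continuity in S} is in hand.
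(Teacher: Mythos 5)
Your proposal is correct and follows essentially the same route as the paper: the paper likewise combines the explicit formula \eqref{K} with the $1$-Lipschitz bounds for $(\cdot)^+$, $\wedge$, $\vee$, $\sup$, $\inf$ to dominate $|K^1_t-K^2_t|$ by $\sup_s|\Phi^1_s-\Phi^2_s|\vee\sup_s|\Psi^1_s-\Psi^2_s|$, and then invokes Lemma \ref{continuity in S}. The only cosmetic difference is that you name the intermediate quantities $E_T,F_T$ and perform the reduction up front, whereas the paper writes out the two chains of inequalities directly.
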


\begin{proof}
Note that for any $x_i,y_i\in\mathbb{R}$, $i=1,2$, the following inequalities hold:
\begin{align*}
|x_1\wedge x_2-y_1\wedge y_2|&\leq |x_1-y_1|\vee |x_2-y_2|,\\
|x_1^+-x_2^+|&\leq |x_1-x_2|.
\end{align*}
It is easy to check that
\begin{align*}
&|(-\Phi^1_0)^+\wedge \inf_{r\in[0,t]}(-\Psi^1_r)-(-\Phi^2_0)^+\wedge \inf_{r\in[0,t]}(-\Psi^2_r)|\\
\leq &|(-\Phi^1_0)^+-(-\Phi^2_0)^+|\vee| \inf_{r\in[0,t]}(-\Psi^1_r)- \inf_{r\in[0,t]}(-\Psi^2_r)|\\
\leq &|\Phi^1_0-\Phi^2_0|\vee \sup_{r\in[0,t]} |\Psi^1_r-\Psi^2_r|
\end{align*}
and
\begin{align*}
&|\sup_{s\in[0,t]}[(-\Phi^1_s)\wedge \inf_{r\in[s,t]}(-\Psi^1_r)]-\sup_{s\in[0,t]}[(-\Phi^2_s)\wedge \inf_{r\in[s,t]}(-\Psi^2_r)]|\\
\leq &\sup_{s\in[0,t]}|(-\Phi^1_s)\wedge \inf_{r\in[s,t]}(-\Psi^1_r)-(-\Phi^2_s)\wedge \inf_{r\in[s,t]}(-\Psi^2_r)|\\
\leq &\sup_{s\in[0,t]}\left[|\Phi^1_s-\Phi^2_s|\vee |\inf_{r\in[s,t]}(-\Psi^1_r)-\inf_{r\in[s,t]}(-\Psi^2_r)|\right]\\
\leq &\sup_{s\in[0,t]}\left[|\Phi^1_s-\Phi^2_s|\vee \sup_{r\in[s,t]}|\Psi^1_r-\Psi^2_r|\right]\\
\leq &\left[\sup_{s\in[0,t]}|\Phi^1_s-\Phi^2_s|\vee \sup_{s\in[0,t]}|\Psi^1_s-\Psi^2_s|\right].
\end{align*}
Recalling the construction of $K^i$ in \eqref{K} and  applying Lemma \ref{continuity in S}, we obtain the desired result. 
\end{proof}

\begin{remark}
Proposition \ref{prop4.1} is the generalization of Corollary 1.6 in \cite{KLR} for the uniform metric and Proposition 4.1 in \cite{S1}. In fact, for any given $\alpha^i,\beta^i\in D[0,\infty)$ with $\inf_{t}(\beta^i_t-\alpha^i_t)>0$, $i=1,2$, let $L^i(t,x)=x-\beta^i_t$ and $R^i(t,x)=x-\alpha^i_t$. The result in Proposition \ref{prop4.1} coincides with (4.2) in \cite{S1}.
\end{remark}

For any fixed $T>0$, let $\mathcal{M}_T$ be the collection of  strictly increasing continuous functions $\lambda$ of $[0,T]$ onto itself. The $J_1$ metric $d_{0,T}$ on $D[0,T]$ is defined by 
\begin{align*}
d_{0,T}(f,g)=\inf_{\lambda\in\mathcal{M}_T}(\sup_{t\in[0,T]}|\lambda(t)-t|\vee \sup_{t\in[0,T]}|f_t-g_{\lambda(t)}|).
\end{align*}

\begin{proposition}\label{prop4.2}
Suppose that Assumption \ref{ass1} holds. Given $S,S'\in D[0,\infty)$, let $(X,K)$, $(X',K')$ be the solution to the nonlinear Skorokhod problem $\mathbb{SP}_L^R(S)$ and $\mathbb{SP}_L^R(S')$, respectively. Then, for any $T>0$, we have
\begin{align*}
d_{0,T}(K,K')\leq \frac{1}{c}(\hat{L}_T\vee\hat{R}_T)
+\frac{C}{c}d_{0,T}(S,S'),
\end{align*}
where 
\begin{align*}
&\hat{L}_T:=\sup_{(t,s,x)\in[0,T]\times[0,T]\times\mathbb{R}}|L(t,x)-L(s,x)|,\\
& \hat{R}_T:=\sup_{(t,s,x)\in[0,T]\times[0,T]\times\mathbb{R}}|R(t,x)-R(s,x)|.
\end{align*}
\end{proposition}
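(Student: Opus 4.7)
The plan is to mimic the proof of Proposition \ref{prop4.1} but with a time change: for any $\lambda\in\mathcal{M}_T$, compare $K_t$ with $K'_{\lambda(t)}$ using the explicit formula \eqref{K} and a time-shifted version of Lemma \ref{continuity in S}, then take the infimum over $\lambda$. Since $\lambda\in\mathcal{M}_T$ is a strictly increasing continuous bijection of $[0,T]$ onto itself with $\lambda(0)=0$, changing variables $u\mapsto\lambda(u)$ inside the suprema and infima in \eqref{K} rewrites
\begin{align*}
K'_{\lambda(t)}=-\max\!\left((-\Phi'_0)^+\wedge\inf_{r\in[0,t]}(-\Psi'_{\lambda(r)}),\ \sup_{s\in[0,t]}\!\left[(-\Phi'_{\lambda(s)})\wedge\inf_{r\in[s,t]}(-\Psi'_{\lambda(r)})\right]\right),
\end{align*}
so the structure of $K'_{\lambda(t)}$ is identical to that of $K_t$ but with $\Phi,\Psi$ replaced by $\Phi'\circ\lambda,\Psi'\circ\lambda$.

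Next, I would adapt the argument of Lemma \ref{continuity in S} at shifted times. Since $\Phi_t$ solves $L(t,S_t+\Phi_t)=0$ while $\Phi'_{\lambda(t)}$ solves $L(\lambda(t),S'_{\lambda(t)}+\Phi'_{\lambda(t)})=0$, subtracting, inserting the intermediate term $L(\lambda(t),S_t+\Phi'_{\lambda(t)})$ and using Assumption \ref{ass1} (iii) together with the definition of $\hat L_T$ yields
\begin{align*}
|\Phi_t-\Phi'_{\lambda(t)}|\le \frac{C}{c}|S_t-S'_{\lambda(t)}|+\frac{1}{c}\hat L_T,\qquad |\Psi_t-\Psi'_{\lambda(t)}|\le \frac{C}{c}|S_t-S'_{\lambda(t)}|+\frac{1}{c}\hat R_T.
\end{align*}
The same max/min/sup/inf manipulations as in the proof of Proposition \ref{prop4.1}, now applied to the pair $(\Phi,\Psi)$ and $(\Phi'\circ\lambda,\Psi'\circ\lambda)$, then give
\begin{align*}
\sup_{t\in[0,T]}|K_t-K'_{\lambda(t)}|\le \frac{C}{c}\sup_{t\in[0,T]}|S_t-S'_{\lambda(t)}|+\frac{1}{c}(\hat L_T\vee \hat R_T).
\end{align*}

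Finally, for each $\varepsilon>0$ I would select $\lambda\in\mathcal{M}_T$ so that $\sup_{t\in[0,T]}|\lambda(t)-t|\vee \sup_{t\in[0,T]}|S_t-S'_{\lambda(t)}|\le d_{0,T}(S,S')+\varepsilon$. Because $C\ge c$ and $\hat L_T,\hat R_T\ge 0$, the bound above dominates $\sup_{t\in[0,T]}|\lambda(t)-t|$, hence
\begin{align*}
d_{0,T}(K,K')\le \sup_{t\in[0,T]}|\lambda(t)-t|\vee\sup_{t\in[0,T]}|K_t-K'_{\lambda(t)}|\le \frac{1}{c}(\hat L_T\vee\hat R_T)+\frac{C}{c}(d_{0,T}(S,S')+\varepsilon),
\end{align*}
and letting $\varepsilon\downarrow 0$ closes the argument. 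The main obstacle is the shifted-time Lipschitz estimate in the second paragraph: it is the only place where the $J_1$ rather than the uniform character of the metric really enters, and it is precisely the oscillation $\hat L_T$ (resp.\ $\hat R_T$) of $L$ (resp.\ $R$) in its temporal variable that absorbs the discrepancy between evaluating the constraint at $t$ versus $\lambda(t)$. Everything else is a routine reuse of Proposition \ref{prop4.1}.
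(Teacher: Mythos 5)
Your proposal is correct and follows essentially the same route as the paper: the paper likewise picks a near-optimal $\lambda$, observes that the time-changed solution solves the Skorokhod problem for the time-changed data $(L\circ\lambda,R\circ\lambda,S\circ\lambda)$, and then invokes Proposition \ref{prop4.1}, with the oscillations $\hat L_T,\hat R_T$ absorbing the temporal discrepancy exactly as in your shifted Lipschitz estimate. Your explicit remark that $C\ge c$ ensures the bound also dominates $\sup_t|\lambda(t)-t|$ is a point the paper leaves implicit.
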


\begin{proof}
Without loss of generality, we assume that $S\neq S'$.  By the definition of $d_{0,T}$, for any $\delta>0$, there exists some $\lambda \in\mathcal{M}_T$ such that 
\begin{align*}
\sup_{t\in[0,T]}|\lambda(t)-t|&\leq d_{0,T}(S,S')+\delta [1\wedge d_{0,T}(S,S')],\\
\sup_{t\in[0,T]}|S'_t-S_{\lambda(t)}|&\leq d_{0,T}(S,S')+\delta [1\wedge d_{0,T}(S,S')].
\end{align*}

Given $\lambda \in\mathcal{M}_T$, for any $t\in[0,T]$, applying the definition of $K$ in \eqref{K}, it is easy to check that 
\begin{align*}
K_{\lambda(t)}=-\max\left((-\Phi_0)^+\wedge \inf_{r\in[0,t]}(-\Psi_{\lambda(r)}),\sup_{s\in[0,t]}\left[(-\Phi_{\lambda(s)})\wedge \inf_{r\in[s,t]}(-\Psi_{\lambda(r)})\right] \right).
\end{align*}
That is, $(X\circ \lambda,K\circ\lambda)$ is the solution to the Skorokhod problem $\mathbb{SP}_{L\circ\lambda}^{R\circ\lambda}(S\circ \lambda)$ on $[0,T]$, where $(f\circ\lambda)_t=f_{\lambda(t)}$ for $f=X,K,S$ and $(g\circ\lambda)(t,x)=g(\lambda(t),x)$ for $g=L,R$. By Proposition \ref{prop4.1}, we have
\begin{align*}
\sup_{t\in[0,T]}|K'_t-K_{\lambda(t)}|\leq &\frac{1}{c}\left[\sup_{(t,x)\in[0,T]\times\mathbb{R}}|L(t,x)-L({\lambda(t)},x)|\vee \sup_{(t,x)\in[0,T]\times\mathbb{R}}|R(t,x)-R({\lambda(t)},x)|\right]\\
&+\frac{C}{c}\sup_{t\in[0,T]}|S'_t-S_{\lambda(t)}|\\
\leq &\frac{1}{c}(\hat{L}_T\vee\hat{R}_T)
+\frac{C}{c}\left(d_{0,T}(S,S')+\delta [1\wedge d_{0,T}(S,S')]\right).
\end{align*}
Since the above inequality holds for any $\delta>0$, we obtain the desired result.
\end{proof}

\begin{remark}
Proposition \ref{prop4.2} is the generalization of Corollary 1.6 in \cite{KLR} for the $J_1$ metric and Proposition 4.2 in \cite{S1}. Different from the results in \cite{KLR} for the Skorokhod map on time-independent interval $[0,a]$, where $a$ is a positive constant, the solution to the Skorokhod problem with two nonlinear reflecting boundaries is not continuous under $J_1$ metric. The terms $\hat{L}_T$, $\hat{R}_T$ may be regarded as the ``oscillations" of $L$ and $R$, which cannot be omitted (see Example 4.1 in \cite{S1}).
\end{remark}






\end{document}